\newtheorem{Theorem}{Theorem}[section]
\newtheorem{Proposition}[Theorem]{Proposition}
\newtheorem{Lemma}[Theorem]{Lemma}
\newtheorem{Corollary}[Theorem]{Corollary}
\theoremstyle{definition}
\newtheorem{Definition}[Theorem]{Definition}
\newtheorem{Remark}[Theorem]{Remark}
\newcommand{\bTheorem}[1]{
\begin{Theorem} \label{T#1} }
\newcommand{\eT}{\end{Theorem}}
\newcommand{\bProposition}[1]{
\begin{Proposition} \label{P#1}}
\newcommand{\eP}{\end{Proposition}}
\newcommand{\bLemma}[1]{
\begin{Lemma} \label{L#1} }
\newcommand{\eL}{\end{Lemma}}
\newcommand{\bCorollary}[1]{
\begin{Corollary} \label{C#1} }
\newcommand{\eC}{\end{Corollary}}
\newcommand{\bRemark}[1]{
\begin{Remark} \label{R#1} }
\newcommand{\eR}{\end{Remark}}
\newcommand{\bDefinition}[1]{
\begin{Definition} \label{D#1} }
\newcommand{\eD}{\end{Definition}}
\newcommand{\Del}{\Delta_x}
\newcommand{\prst}{\mathbb{P}}
\newcommand{\ce}{c_{\ep}}
\newcommand{\tvre}{\widetilde{\vr}_{\ep}}
\newcommand{\tvue}{\widetilde{\vu}_{\ep}}
\newcommand{\tce}{\widetilde{c}_\ep}
\newcommand{\bfphi}{\boldsymbol{\varphi}}
\newcommand{\TN}{\mathcal{T}^N}
\newcommand{\blue}{\color{blue}}
\newcommand{\bFormula}[1]{
\begin{equation} \label{#1}}
\newcommand{\eF}{\end{equation}}
\newcommand{\Ov}[1]{\overline{#1}}
\newcommand{\DC}{C^\infty_c}
\newcommand{\aleq}{\stackrel{<}{\sim}}
\newcommand{\ageq}{\stackrel{>}{\sim}}
\newcommand{\vr}{\varrho}
\newcommand{\vre}{\vr_\ep}
\newcommand{\vue}{\vu_\ep}
\newcommand{\vu}{\vc{u}}
\newcommand{\vc}[1]{{\bf #1}}
\newcommand{\Div}{{\rm div}_x}
\newcommand{\Grad}{\nabla_x}
\newcommand{\dx}{\,{\rm d} {x}}
\newcommand{\dt}{\,{\rm d} t }
\newcommand{\intO}[1]{\int_{\Omega} #1 \ \dx}
\newcommand{\intTN}[1]{\int_{\TN} #1 \ \dx}
\newcommand{\D}{{\rm d}}
\newcommand{\ep}{\varepsilon}
\newcommand{\R}{\mathbb{R}}
\newcommand{\expe}[1]{ \mathbb{E} \left[ #1 \right] }
\newcommand{\vrn}{\vr_n}
\newcommand{\vun}{\vu_n}
\newcommand{\cn}{c_n}
\newcommand{\RR}{\mathbf{R}}
\definecolor{Cgrey}{rgb}{0.85,0.85,0.85}
\definecolor{Cblue}{rgb}{0.50,0.85,0.85}
\definecolor{Cred}{rgb}{1,0,0}
\definecolor{fancy}{rgb}{0.10,0.85,0.10}
\newcommand\Cbox[2]{%
    \newbox\contentbox%
    \newbox\bkgdbox%
    \setbox\contentbox\hbox to \hsize{%
        \vtop{
            \kern\columnsep
            \hbox to \hsize{%
                \kern\columnsep%
                \advance\hsize by -2\columnsep%
                \setlength{\textwidth}{\hsize}%
                \vbox{
                    \parskip=\baselineskip
                    \parindent=0bp
                    #2
                }%
                \kern\columnsep%
            }%
            \kern\columnsep%
        }%
    }%
    \setbox\bkgdbox\vbox{
        \color{#1}
        \hrule width  \wd\contentbox %
               height \ht\contentbox %
               depth  \dp\contentbox
        \color{black}
    }%
    \wd\bkgdbox=0bp%
    \vbox{\hbox to \hsize{\box\bkgdbox\box\contentbox}}%
    \vskip\baselineskip%
}
\date{}
\begin{document}


\title{A diffuse interface model of a two--phase flow with thermal fluctuations}

\author{Eduard Feireisl
\thanks{The research of E.F.~leading to these results has received funding from the
Czech Sciences Foundation (GA\v CR), Grant Agreement
18--05974S. The Institute of Mathematics of the Academy of Sciences of
the Czech Republic is supported by RVO:67985840.} \and Madalina Petcu
}

\date{\today}

\maketitle

\bigskip

\centerline{Institute of Mathematics of the Academy of Sciences of the Czech Republic}

\centerline{\v Zitn\' a 25, CZ-115 67 Praha 1, Czech Republic}

\bigskip

\centerline{Laboratoire de Math\' ematiques et Applications, UMR CNRS 7348 - SP2MI}

\centerline{Universit\' e de Poitiers, Boulevard Marie et Pierre Curie - T\' el\' eport 2}

\centerline{86962 Chasseneuil, Futuroscope Cedex,
France}

\centerline{The Institute of Mathematics of the Romanian Academy, Bucharest, Romania}

\centerline{and}

\centerline{The Institute of Statistics and Applied Mathematics of the Romanian Academy, Bucharest, Romania}

\begin{abstract}

We consider a model of a two phase flow proposed by Anderson, McFadden and Wheeler taking into account possible thermal fluctuations.
The mathematical model consists of the compressible Navier--Stokes system coupled with the Cahn--Hilliard equation, where the latter is
driven by a multiplicative temporal white noise accounting for thermal fluctuations. We show that existence of
\emph{dissipative martingale solutions}
satisfying the associated total energy balance.

\end{abstract}

{\bf Keywords:} Compressible Navier--Stokes system, stochastic Cahn--Hilliard equation, weak martingale solution

\tableofcontents

\section{Introduction}
\label{M}

The phase field approach has been widely used in models of multicomponent fluids to
avoid the mathematical difficulties related to the presence of sharp interfaces separating the fluid components. We consider a model of a binary
mixture of compressible viscous fluids proposed by Anderson, McFadden, and Wheeler \cite{AnFaWh}, where the time evolution of the phase variable -
the concentration difference - is described by means of the Cahn--Hilliard equation, cf. the related models and a general discussion
in Lowengrub and Truskinovski \cite{LoTr}. In addition to the standard formulation, we consider a stochastic perturbation of the
Cahn--Hilliard system that is nowadays a well accepted model of thermal fluctuations, see e.g. Debussche,
Gouden\` ege \cite{DebGou}, Gouden\` ege \cite{Gou2}, Gouden\` ege and Manca \cite{GouMan} or the more recent mathematical treatment
also including the fluid motion by Gal and Medjo \cite{GalMed1}, \cite{GalMed2}, \cite{Medj} or Scarpa \cite{Scar}.
Any mathematical model including the Navier--Stokes system necessarily inherits the well know difficulties concerning the existence of smooth solutions as well as related issues concerning well--posedness in general. Here we address the physically relevant 3D-setting in the context of general compressible viscous fluids, where the aforementioned difficulties are augmented by possible singular behavior of the fluid density.

\subsection{Field equations}

{ We start by considering a model of a binary mixture of compressible, viscous and macroscopically immiscible fluids, filling a bounded domain $\mathcal{D}\subset \mathbf{R}^N$.}
The basic field variables describing the mixture at a given time $t \in (0,T)$ and a spatial position $x \in \RR^N$, $N=1,2,3$ are:
the mass density $\vr = \vr(t,x)$, the macroscopic fluid velocity $\vu = \vu(t,x)$, and the order parameter $c = c(t,x)$
satisfying the following system of equations:
\begin{equation} \label{M1}
\D \vr + \Div (\vr \vu) \dt = 0,
\end{equation}
\begin{equation} \label{M2}
\begin{split}
\D (\vr \vu) + \Div (\vr \vu \otimes \vu) \dt &+ \Grad p(\vr,c) \dt = \Div \mathbb{S}(\Grad \vu) \dt -
\Div \left( \Grad c \otimes \Grad c - \frac{1}{2} |\Grad c |^2 \right) \dt,
\end{split}
\end{equation}
\begin{equation} \label{M3}
\D (\vr c ) + \Div (\vr c \vu) \dt = \Del \mu \dt + \vr \sigma(c) \D W.
\end{equation}

The pressure $p = p(\vr,c)$ and the chemical potential $\mu(\vr,c, \Del c)$ are derived from the
free energy
\begin{equation}\label{free}
E_{\rm free} = { \int_{\mathcal{D}}\vr f(\vr,c) + \frac{1}{2} |\Grad c|^2 \,{\rm d}x,}
\end{equation}
in the form of constitutive relations:
\[
\vr \mu = \vr \frac{\partial f(\vr, c)}{\partial c} - \Del c,\
p(\vr, c) = \vr^2 \frac{\partial f(\vr,c) }{\partial \vr}.
\]
The viscous stress tensor $\mathbb{S}$ is given by the standard Newton's rheological law,
\[
\mathbb{S}(\Grad \vu) = \nu_{\rm shear} \left( \Grad \vu^t + \Grad^t \vu - \frac{2}{N} \Div \vu \mathbb{I} \right)
+ \nu_{\rm bulk} \Div \vu \mathbb{I}.
\]
We refer to Anderson, McFadden, and Wheeler \cite{AnFaWh} for a detailed derivation as well as the physical background of the system (\ref{M1}--\ref{M3}) without stochastic forcing. 
The driving term in the Cahn--Hilliard equation accounts for the presence of thermal fluctuations and is interpreted as a stochastic integral in the sense of
It\^{o}, cf. \cite{DebGou}.

\subsection{Stochastic forcing}

Due to the presence of the stochastic integral, all quantities in (\ref{M1}--\ref{M3}) must be interpreted as \emph{random variables} with respect to a stochastic basis
$[\Omega, \mathfrak{B}, \mathcal{P}]$, where $\Omega$ is a probability space, $\mathfrak{B}$ a system of Borel sets, and $\mathcal{P}$ a probability
measure. $W$ is a cylindrical Wiener process $W = \{ \beta_k(t) \}_{k=1}^\infty$, where $\beta_k$ are standard real--valued mutually independent Wiener processes,
\begin{equation} \label{M8}
\begin{split}
\sigma (c) \D W &\equiv \sum_{k = 1}^\infty \alpha_k \sigma_k(c) \D \beta_k, \ \mbox{where}\ \sum_{k=1}^\infty \alpha^2_k < \infty,\\
\sigma_k &\in W^{2,\infty}(\RR),\ \| \sigma_k \|_{W^{2, \infty}(\RR)} \aleq 1 \ \mbox{uniformly in}\ k =1,2,\dots
\end{split}
\end{equation}
We denote by $\{ \mathfrak{F}_t \}_{t \geq 0}$ a complete right--continuous filtration in $\Omega$, non--anticipative with respect to $W$.
In accordance with our definition, $W$ may be seen as a continuous mapping of $[0,T]$ into the space $\RR^{\aleph_0}$ of sequences of real numbers.
We consider two subspaces of $\RR^{\alpha_0}$, namely $\mathfrak{U}$ and $\mathfrak{U}_0$
endowed with the Hilbert structure
\[
\left< \{ a_k \}; \{ b_k \} \right>_{\mathfrak{U}} = \sum_{k=1}^\infty a_k b_k,
\
\mbox{and} \
\left< \{ a_k \}; \{ b_k \} \right>_{\mathfrak{U}_0} = \sum_{k=1}^\infty \alpha_k^{ 2}  a_k b_k,
\]
respectively.

\subsection{Physical space, initial conditions}

To avoid technicalities, we restrict ourselves to the spatially periodic case, meaning the spatial domain ${\mathcal{D}}$ is represented by the flat torus
\[
\mathcal{T}^N = \left\{ [-\pi, \pi ]|_{\{ -\pi, \pi \}} \right\}^N, \ N=1,2,3.
\]
The system (\ref{M1}--\ref{M3}) is then considered on the space--time cylinder $Q_T \equiv (0,T) \times \TN$ and supplemented with the initial coditions
\begin{equation} \label{M9}
\vr(0, \cdot) = \vr_0, \ \vu(0,\cdot) = \vu_0, \ c(0, \cdot) = c_0.
\end{equation}
In agreement with the character of the problem, the initial data are random variables adapted (measurable) with respect to $\mathfrak{F}_0$. Among other technical conditions specified below, we assume that
\begin{equation} \label{M10}
\vr_0 > 0 \ \mbox{in}\ \TN,\ \intTN{ \vr_0 } = M > 0,\ \mathcal{P}\mbox{-a.s.}
\end{equation}
where the total mass $M$ is a \emph{deterministic} constant. As a consequence of our choice of the boundary conditions, we have
\[
\intTN{ \vr(\tau, \cdot) } = \intTN{ \vr_0 } = M \ \mbox{for any}\ \tau \geq 0.
\]

\subsection{Dissipative martingale solutions}

Our goal is to establish the existence of global-in-time weak martingale solutions to the problem (\ref{M1}--\ref{M3}), (\ref{M9}). Martingale solutions, in general, are defined on a different probability space than $\Omega$, whereas the process $W$ is considered as an integral part of the solution. Dissipative
martingale solution satisfy, in addition, a suitable form of the energy balance (inequality). The existence of martingale and dissipative martingale solutions
to a stochastic perturbation of the compressible Navier--Stokes system was proved by Breit and Hofmanov\' a \cite{BrHo} and later elaborated in the 
monograph \cite{BrFeHobook}. The existence of weak solutions to the deterministic version of the system (\ref{M1}--\ref{M3}), (\ref{M9}) was shown in \cite{AbFei}.

To the best of our knowledge, this is the first analytical treatment of the compressible Navier--Stokes system coupled with a stochastically driven
equation. Besides its physical relevance, the problem is therefore mathematically challenging. We propose a multi-level approximation scheme similar to
that one used in \cite{EF70}. More specifically, the equation of continuity (\ref{M1}) is regularized by means of an artificial viscosity term,
the momentum equation is approximated via a Faedo-Galerkin scheme, while the stochastic Cahn--Hilliard equation (\ref{M3}) is solved by an adaption of the
method proposed in the context of stochastic ODE's by It\^{o} and Nisio \cite{ItNo}. The solutions are obtained by means of the asymptotic limit
in the approximate scheme combining the deterministic theory for the Navier--Stokes system proposed by Lions \cite{LI4} and the stochastic framework in the context of weak topologies developed in \cite{BrFeHobook}.

The paper is organized as follows. Section \ref{P} contains some preliminary material; we introduce the concept of dissipative martingale solution and state
the main result. In Section \ref{A}, we introduce the basic approximation scheme to construct global-in-time solutions. Section \ref{S} is devoted to the analysis of the stochastic Cahn--Hilliard problem. The asymptotic limit in the approximation scheme is then performed in Section \ref{L}. Possible
extensions of the existence result are discussed in Section \ref{D}.

\section{Preliminaries, main results}
\label{P}

We start by specifying the basic hypotheses imposed on the constitutive functions.
We suppose the free energy $f = f(\vr,c)$ takes the form
\begin{equation} \label{H1}
\begin{split}
f(\vr, c) = f^e(\vr) &+ f^m(\vr, c) + f^c(c), \\ \mbox{where}\
f^e &= a \vr^{\gamma - 1}, \ a > 0, \ { \gamma >3,}\\
f^c &\in C^3(\RR), \ f^c(0) = 0,\ | \partial^j_c f^c | \aleq 1,\ j=2,3,\ \partial_c f^c (c) \approx c
\ \mbox{for}\ |c| > > 1 \ \mbox{large},\\
f^m(\vr, c) &= \log(\vr) H(c), \ H \in  C^3(\RR), \ | \partial^j_c H | \aleq 1, \ j=1,2,3.
\end{split}
\end{equation}
Here and hereafter, $A \aleq B$ means there is a positive constant $C$ such that $A \leq C B$, similarly $A \ageq B$ stands for
$A \geq C B$, and $A \approx B$ means $A \aleq B$ and $B \aleq A$. We refer to \cite{AbFei} for the physical background of such a choice of the free energy.

\subsection{Energy balance}

Assume, for { the moment}, that $[\vr, \vu, c]$ is a smooth solution of the system (\ref{M1}--\ref{M3}) in $Q_T$ { and that $\rho$ is a function bounded from below by some positive constant}. { In order to obtain the total energy balance for (\ref{M1}--\ref{M3}), we use the classical  It\^{o} formula (see e.g., \cite{LiRo}) applied to the total energy functional:
\begin{equation}\label{totalenerg}
\mathcal{E}=\int_{\mathcal{D}}\ \frac{1}{2} \vr |\vu|^2 + \vr f(\vr,c) + \frac{1}{2} |\Grad c|^2 \,{\rm d}x.
\end{equation}
Using the smoothness of the solution  $[\vr, \vu, c]$ and the Fr\'echet differentiability of the total energy $\mathcal{E}$, we obtain the following energy equation:
}
\begin{equation} \label{H2}
\begin{split}
\D &\intTN{ \left[ \frac{1}{2} \vr |\vu|^2 + \vr f(\vr,c) + \frac{1}{2} |\Grad c|^2 \right] } +
\intTN{ \Big[ \mathbb{S}(\Grad \vu) : \Grad \vu +  |\Grad \mu|^2 \Big] } \dt \\
&= \intTN{ \frac{1}{2} \sum_{k=1}^\infty \alpha_k^2 |\Grad \sigma_k (c) |^2 } \dt
+  \intTN{ \frac{1}{2} \vr \frac{\partial^2 f(\vr,c)}{\partial c^2} \sum_{k=1}^\infty \alpha^2_k |\sigma_k(c)|^2 } \dt\\
&+ \left( \intTN{ \vr \mu \sigma(c) } \right) \D W.
\end{split}
\end{equation}
The first two terms on the right-hand side of (\ref{H2}) result from the application of It\^{o}'s chain rule in the derivation. The theory of
\emph{dissipative} solutions includes (\ref{H2}), or the related inequality, as an integral part of the definition of a weak solution to
(\ref{M1}--\ref{M3}).

\subsection{Dissipative martingale solutions}

Martingale solutions are weak in both PDE and probability sense. This means that the partial derivatives are interpreted in the sense of distributions, while
the process $W$ as well as the associated probability space form a part of the unknowns of the problem. Accordingly, it is convenient to consider
the field variables as stochastic processes ranging in the abstract space of distributions. A suitable framework is provided by the scale of
Sobolev spaces $W^{\lambda, 2}(\TN)$, $\lambda \in \RR$.

\begin{Definition} \label{D1}

A \emph{martingale solution} of the system (\ref{M1}--\ref{M3}) is an object consisting of a complete probability basis
$[\Omega, \mathfrak{B}, \mathcal{P}] $, a cylindrical Wiener process $W$ with the associated right-continuous complete filtration
$\{ \mathfrak{F}_t \}_{t \geq 0}$, and a stochastic process $\tau \in [0,T] \mapsto [\vr(\tau, \cdot), \vu(\tau, \cdot), c(\tau, \cdot)]$
ranging in the Hilbert space $W^{-\ell,2}(\TN) \times W^{-\ell,2}(\TN; \RR^N) \times W^{-\ell, 2}(\TN)$, $\ell > N$ enjoying the following properties
$\mathcal{P}$-a.s.:
\begin{itemize}
\item $\vr \in C_{{\rm weak}}([0,T]; L^\gamma (\Omega))$, $\vr > 0$, $\vr \vu \in C_{{\rm weak}}([0,T]; L^{\frac{2 \gamma}{\gamma + 1}}(\Omega; \RR^N))$
are $\mathfrak{F}_t-$progressively measurable, and the integral identity
\begin{equation} \label{H3}
\left[ \intTN{ \vr \varphi } \right]_{t = 0}^{t = \tau} = \int_0^\tau \intTN{
\left[ \vr \partial_t \varphi + \vr \vu \cdot \Grad \varphi \right] } \dt
\end{equation}
holds
for any (deterministic) test function $\varphi \in C^1(\Ov{Q}_T)$;
\item $\vr c \in C_{\rm weak}([0,T]; L^2(\TN))$, $c \in L^\infty(0,T; W^{1,2}(\TN)) \cap L^2(0,T; W^{2,2}(\TN))$,\\
$\vu \in L^2(0,T; W^{1,2}(\TN; R^N))$ are $\mathfrak{F}_t-$progressively measurable, and the integral identity
\begin{equation} \label{H4}
\begin{split}
\left[ \intTN{ \vr \vu \cdot \bfphi } \right]_{t = 0}^{t = \tau} &=
\int_0^\tau \intTN{ \left[ \vr \vu \cdot \partial_t \bfphi + \vr \vu \otimes \vu : \Grad \bfphi + p(\vr,c) \Div \bfphi \right] } \dt\\
&- \int_0^\tau \intTN{ \left[ \mathbb{S}(\Grad \vu) : \Grad \bfphi - \left( \Grad c \otimes \Grad c - \frac{1}{2} |\Grad c|^2 \mathbb{I} \right)
: \Grad \bfphi \right] } \dt
\end{split}
\end{equation}
holds for any (deterministic) test function $\bfphi \in C^1(\Ov{Q}_T ; \RR^N)$;
\item $\mu \in L^2(0,T; W^{1,2}(\TN))$,
\begin{equation} \label{H5}
\vr \mu = \vr \frac{ \partial f(\vr, c) }{\partial c} - \Del c,
\end{equation}
and the integral identity
\begin{equation} \label{H6}
\begin{split}
&\left[ \intTN{ \vr c \varphi } \right]_{t=0}^{t = \tau} \\ &= \int_0^\tau \intTN{
\left[ \vr c \partial_t \varphi + \vr c \vu \cdot \Grad \varphi - \Grad \mu \cdot \Grad \varphi \right] } +
\int_0^\tau \left( \intTN{ \vr \sigma(c) \varphi } \right) \ \D W
\end{split}
\end{equation}
holds for any (deterministic) test function $\varphi \in C^1([0,T] \times \TN)$.

\end{itemize}
\end{Definition}

\begin{Remark} \label{R2}

Neither $\vu$ nor $c$ enjoy any continuity in time so it might seem ambiguous to speak about their progressive measurability. This must be understood in the sense that
\[
\vu, c \in L^2_{\rm prog} ( \Omega \times (0,T); L^2(\TN))
\]
meaning these functions are progressively measurable modulo a modification on the set of ${\rm prog}-$measure zero. Equivalently, any backward time regularization of these quantities is progressively measurable, see \cite[Chapter 2, Sections 2.1, 2.2]{BrFeHobook}.

\end{Remark}

As the probability basis is a part of the martingale solution, the initial data are attained only in law.

\begin{Definition} \label{D2}

Let $\Lambda_0$ be a Borel probability measure on the Hilbert space \\ $W^{-\ell,2}(\TN) \times W^{-\ell,2}(\TN; R^N) \times W^{-\ell,2}(\TN)$.
A martingale solution of the system (\ref{M1}--\ref{M3}) satisfies the initial conditions $[\vr_0, \vu_0, c_0]$ if
\[
\vr(0, \cdot) = \vr_0, \ \vr \vu(0, \cdot) = \vr_0 \vu_0, \ \vr c(0, \cdot) = \vr_0 c_0 \ \mathcal{P}-\mbox{a.s.},
\]
where
\[
\mathcal{L}[\vr_0, \vu_0, c_0] = \Lambda_0.
\]

\end{Definition}

Finally, we give a definition of the dissipative solution.

\begin{Definition} \label{D3}

A martingale solution of the system (\ref{M1}--\ref{M3}) is called \emph{dissipative martingale solution} if, $\mathcal{P}-a.s.$, the energy inequality
\begin{equation} \label{H8}
\begin{split}
& \left[ \psi \intTN{ \left[ \frac{1}{2} \vr |\vu|^2 + \vr f(\vr,c) + \frac{1}{2} |\Grad c|^2 \right] } \right]_{t = 0}^{t = \tau} +
\int_0^\tau \psi \intTN{ \Big[ \mathbb{S}(c, \Grad \vu) : \Grad \vu +  |\Grad \mu|^2 \Big] } \dt \\
&\leq \int_0^\tau \partial_t \psi \intTN{ \left[ \frac{1}{2} \vr |\vu|^2 + \vr f(\vr,c) + \frac{1}{2} |\Grad c|^2 \right] } \dt
\\
&+ \int_0^\tau \psi \intTN{ \frac{1}{2} \sum_{k=1}^\infty \alpha_k^2 |\Grad \sigma_k (c) |^2 } \dt
+ \int_0^\tau \psi \intTN{ \frac{1}{2} \vr \frac{\partial^2 f(\vr,c)}{\partial c^2} \sum_{k=1}^\infty \alpha^2_k |\sigma_k(c)|^2 } \dt\\
&+ \int_0^\tau \psi \left( \intTN{ \vr \mu \sigma(c) } \right) \D W
 \end{split}
\end{equation}
is satisfied for a.a. $\tau \in (0,T)$ and any (deterministic) $\psi \in C^1[0,T]$, $\psi \geq 0$. Here
\[
\left[ \frac{1}{2} \vr |\vu|^2 + \vr f(\vr,c) + \frac{1}{2} |\Grad c|^2 \right](0, \cdot) =
\left[ \frac{1}{2} \vr_0 |\vu_0|^2 + \vr_0 f(\vr_0,c_0) + \frac{1}{2} |\Grad c_0|^2 \right]
\]
where $[\vr_0, \vu_0, c_0]$ are the initial conditions { as defined in \eqref{D2}.}

\end{Definition}

At this stage, we are ready to formulate our main result.

\begin{Theorem} \label{Tmain}

Let $\sigma$ be given as in (\ref{M8}), and
let the function $f$ satisfy the hypothesis (\ref{H1}), with $\gamma > 3$. Let the initial law
$\Lambda_0$ enjoy the following properties:
\[
\Lambda_0 \left\{ \vr_0 > 0 \ \mbox{in}\ \TN, \ \intTN{\vr_0} = M \right\} = 1,
\]
where $M > 0$ is a deterministic constant,
\[
\mathbb{E}_{\Lambda_0} \left[ \left( \| \vr_0 \|_{L^\gamma(\TN)} + \| \vu_0 \|_{L^2(\TN; R^N)} + \| c_0 \|_{W^{1,2}(\TN)} \right)^\beta \right]
\leq c(\beta)
\]
for any $\beta > 0$.

Then the Navier--Stokes--Cahn--Hilliard system (\ref{M1}--\ref{M3}) admits a dissipative martingale solution with the initial law $\Lambda_0$.

\end{Theorem}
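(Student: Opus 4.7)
The strategy is a three-level approximation combining the Lions--Feireisl scheme for the deterministic compressible Navier--Stokes system \cite{LI4, EF70} with the stochastic compactness framework of \cite{BrFeHobook}. I would parametrize the approximation by a Galerkin dimension $n \in \mathbb{N}$, an artificial viscosity coefficient $\ep > 0$ regularizing the continuity equation, and an artificial pressure exponent $\delta > 0$ added to $p(\vr,c)$, and then pass to the limit in the reverse order $n \to \infty$, $\ep \to 0$, $\delta \to 0$.

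First, I would treat the stochastic Cahn--Hilliard equation \eqref{M3} for given data $[\vr,\vu]$ with $\vr$ bounded below and $\vu$ living in the finite-dimensional Galerkin space. Since the coefficients then have enough regularity and the chemical potential $\mu$ solves a second-order linear elliptic problem, one may adapt the It\^o--Nisio method \cite{ItNo}: construct piecewise-frozen approximations $c^{(k)}$ satisfying \eqref{M3} with $\sigma(c^{(k-1)})$ in the noise, derive uniform moment bounds by applying It\^o's formula to the natural Cahn--Hilliard energy $\tfrac12 \|\Grad c\|_{L^2}^2 + \intTN{ \vr f^c(c) }$, and extract an a.s.\ limit via the Jakubowski--Skorokhod theorem. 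The resulting solution map $[\vr,\vu] \mapsto c$ is progressively measurable and yields $c \in L^\infty_t W^{1,2}_x \cap L^2_t W^{2,2}_x$ with moments of all orders.

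Second, I would close the coupling at the approximate level by a pathwise fixed-point argument: for given $\vu$ in the Galerkin space, solve the parabolic continuity equation $\partial_t \vr + \Div(\vr \vu) = \ep \Del \vr$ to obtain $\vr = \vr[\vu] > 0$; feed $[\vr,\vu]$ into the Cahn--Hilliard solver above to obtain $c$; then solve the finite-dimensional momentum SDE driven by the resulting random coefficients. The a~priori energy balance \eqref{H2} for the approximate system, together with a Burkholder--Davis--Gundy estimate on the martingale term, gives uniform moment bounds that prevent blow-up and extend the solution to $[0,T]$. The limits $n \to \infty$ and $\ep \to 0$ are then carried out by the standard stochastic compactness argument: one establishes tightness of the laws of $(\vr, \vu, c, W)$, passes to an a.s.\ convergent version on a new probability basis, and identifies the It\^o integral via the Lipschitz hypothesis \eqref{M8} on the $\sigma_k$ together with strong $L^2_{t,x}$ compactness of $c$ inherited from its parabolic regularity.

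\textbf{The main obstacle} is the strong convergence of the density $\vr$ needed to identify the nonlinear terms $p(\vr,c)$ and $\vr f(\vr,c)$ in the final limit $\delta \to 0$. Here the Lions--Feireisl technique of the \emph{effective viscous flux} -- the observation that $p(\vr,c) - \bigl( \tfrac{2(N-1)}{N}\nu_{\rm shear} + \nu_{\rm bulk} \bigr) \Div \vu$ is more regular than each of its constituents -- must be combined with the renormalized continuity equation and a stochastic analogue of the oscillation defect measure, in the spirit of \cite[Ch.~4]{BrFeHobook}. The mixed free-energy term $f^m = \log(\vr) H(c)$ creates an additional coupling in the effective viscous flux identity, but since $H \in C^3_b$ and $c$ is strongly compact in $L^2_{t,x}$, this contribution is a compact perturbation that does not disturb the argument; the hypothesis $\gamma > 3$ secures enough integrability of $\vr$ to close the remaining estimates and to ensure square-integrability of the martingale $\intTN{\vr \mu \sigma(c)}$. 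Finally, the dissipative energy inequality \eqref{H8} is recovered from \eqref{H2} at the approximate level by lower semicontinuity of the convex energy functional and by convergence of the martingale term tested against $\psi \in C^1[0,T]$.
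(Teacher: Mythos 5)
Your plan follows essentially the same architecture as the paper: a multi--level approximation (Galerkin projection of the momentum equation, artificial viscosity $\ep \Del \vr$ in the continuity equation, an artificial pressure term), the It\^o--Nisio method for the stochastic Cahn--Hilliard equation with the deterministic continuity/momentum block solved pathwise, Jakubowski--Skorokhod compactness for the limit passages, the effective--viscous--flux argument of \cite{AbFei} for the strong convergence of the density, and lower semicontinuity to recover the energy inequality (\ref{H8}). Two structural remarks: the paper does not introduce an independent artificial--pressure level $\delta \to 0$; it takes the artificial pressure in the form $\sqrt{\ep}\,\vr^\alpha$, $\alpha > 4$, so that it disappears together with the artificial viscosity --- this is precisely what the hypothesis $\gamma > 3$ buys, and the separate $\delta$--level you describe is what the paper says would be needed to relax $\gamma > 3$ to $\gamma > 3/2$. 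Also, the paper inserts an additional velocity truncation $[\vu]_R$ and a cut--off $\chi(\|\vu\|_{H_m}-R)$ to make the pathwise ODE system globally solvable, and the momentum equation is deterministic (a random ODE), not an SDE; the a priori bounds on $c$ at the approximate level are obtained by applying It\^o's formula successively to $\|c\|_{L^2}^2$, $\|\Grad c\|_{L^2}^2$, $\|\Del c\|_{L^2}^2$ rather than to the free energy.

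The one point where your plan has a genuine gap is the claim that $c$ (and implicitly $\Grad c$) is strongly compact in $L^2_{t,x}$ ``from its parabolic regularity.'' Uniformly in $\ep$, no time regularity of $c$ itself is available: the Cahn--Hilliard equation degenerates where $\vr$ is small, and the only quantity whose time modulus of continuity is controlled is $\vr c$ (estimate (\ref{V19})), in a negative Sobolev space. Strong convergence of $\Grad c_\ep$ in $L^2((0,T)\times\TN)$ --- which you need to pass to the limit in the capillary stress $\Grad c \otimes \Grad c$, in $\vr f(\vr,c)$, and in the quadratic It\^o--correction terms of the energy, not only in the stochastic integral --- is obtained in the paper by a different route: one first shows $\vr_\ep \mu_\ep \rightharpoonup \vr\mu$ and $\vr_\ep c_\ep \mu_\ep \rightharpoonup \vr c \mu$ weakly (using $\gamma>3$ and the compactness of $\vr_\ep c_\ep$ in $C([0,T];W^{-\ell,2})$ against the weak $W^{1,2}$ convergence of $\mu_\ep$), then identifies $\lim \int |\Grad c_\ep|^2 = \int |\Grad c|^2$ via the constitutive relation (\ref{V4}) and the monotonicity argument of \cite[Section 2.6]{AbFei}, whence norm convergence upgrades weak to strong convergence of the gradients. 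Without this step your scheme cannot close; with it, the rest of your outline goes through as described.
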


The rest of the paper is devoted to the proof of Theorem \ref{Tmain}. Possible extensions to a more general class of constitutive relations and/or
boundary conditions will be discussed in Section \ref{D}.

\section{Approximate solutions}
\label{A}

We start by introducing a family of approximate problems. Let $H_m$ be an $m-$dimensional space of trigonometric polynomials ranging
in $R^N$ endowed with the Hilbert structure of the space $L^2(\TN; \RR^N)$. For $\vu \in H_m$, $R > 0$, we introduce the cut--off operators
\[
[ \vu ]_R \equiv \chi \left( \| \vu \|_{H_m} - R \right) \vu,
\]
where
{
\[
\chi \in C^\infty(R),\  \chi(r) = \left\{ \begin{array}{l} 1 \ \mbox{for}\ r \leq 0, \\
 0 \ \mbox{for}\ r \geq 1,
\end{array} \right.
\]
and
\[
\chi'(r) \leq 0 \ \mbox{for}\ 0 \leq r \leq 1.
\]
}
Let $\Pi_m : L^2(\TN; \RR^N) \to H_m$ be the associated orthogonal projection.

The basic approximate problem reads as follows:
\begin{equation} \label{A5}
\D \vr + \Div (\vr [\vu]_R) \dt = \ep \Del \vr \dt,\ \vr(0, \cdot) = \vr_0;
\end{equation}
\begin{equation} \label{A6}
\begin{split}
\D &\Pi_m (\vr \vu) + \Div \Pi_m (\vr \vu \otimes [\vu]_R ) \dt + \chi \left( \| \vu \|_{H_m} - R \right) \Grad \Pi_m
\left[ p(\vr,c) + \sqrt{\ep} \vr^\alpha \right] \dt \\ &= \ep \Del \Pi_m (\vr \vu) \dt  + \Div \Pi_m \mathbb{S}(\Grad \vu) \dt -
\chi \left( \| \vu \|_{H_m} - R \right) \Div \Pi_m \left( \Grad c \otimes \Grad c - \frac{1}{2} |\Grad c |^2 \right) \dt,\\
&\Pi_m (\vr \vu)(0, \cdot) = \Pi_m (\vr_0 \vu_0);
\end{split}
\end{equation}
\begin{equation} \label{A7}
\D c + [\vc{u}]_R \cdot \Grad c \dt = \frac{1}{\vr} \Del \mu \dt + \sigma(c) \D W, \ \mu =  \frac{\partial f(\vr, c)}{\partial c} - { \frac{1}{\vr}}\Del c,\
c(0, \cdot) = c_0.
\end{equation}

In comparison with Theorem \ref{Tmain}, we impose rather restrictive conditions on the initial data:
\begin{equation} \label{A1}
\begin{split}
\Lambda_0 &\left\{ \vr_0 \in C^{2 + \nu},\  \intTN{ \vr_0} = M,\ 0 < C^1_\ep < \vr_0,\
\| \vr_0 \|_{C^{2 + \nu}(\TN)} \leq C^2_\ep \right\} = 1 \\ &\mbox{for some deterministic positive constants}\ C^1_\ep,\ C^2_\ep { \mbox{ and some } \nu \in (0,1)};\\
\mathbb{E}_{\Lambda_0} &\left[ \left( \| \vu_0 \|_{L^2(\TN; R^N)} + \| c_0 \|_{W^{2,2}(\TN)} \right)^\beta \right] \leq C_\ep (\beta)
\ \mbox{for all}\ \beta > 0.
\end{split}
\end{equation}

The equations (\ref{A5}), (\ref{A6}) are deterministic and can be solved pathwise. For a given
$c \in L^\infty(0,T; W^{1,2}(\TN)) \cap L^2(0,T; W^{2,2}(\TN))$, the problem (\ref{A5}), (\ref{A6}) admits a solution $[\vr, \vu]$ unique in the class
\begin{equation} \label{S1}
\begin{split}
\vu \in C^1([0,T]; H_m), \
\vr, \ \frac{1}{\vr} \in C([0,T]; C^{2 + \nu}(\TN)) \cap C^1([0,T]; C^{\nu}(\TN)),
\end{split}
\end{equation}
see \cite[Chapter 7]{EF70}. Moreover, thanks to the hypotheses imposed on the initial data, the functions $[\vu]_R$ and $\vr$ are bounded in the norms
corresponding to (\ref{S1}) by deterministic constants depending on $R$ and $m$.
In addition, it is easy to check that the mapping
\[
c \in L^\infty(0,T; W^{1,2}(\TN)) \cap L^2(0,T; W^{2,2}(\TN))
\mapsto [\vr, \vu] \in C([0,T]; C(\TN)) \times C([0,T]; H_m)
\]
is continuous. In particular, $[\vr, \vu]$ are $\mathfrak{F}_t-$progressively measurable as long as $c$ enjoys the same property.
In the next section, we discuss solvability of the stochastic Cahn--Hilliard system (\ref{A7}) for given $[\vu]_R$ and $\vr$.

\section{Solvability of the stochastic Cahn--Hilliard system}
\label{S}

We focus on the Cahn--Hilliard equation (\ref{A7}), where we assume, for a moment, that $\vr$ and $\vu$ are $\mathfrak{F}_t$-progressively measurable
processes fixed in the class (\ref{S1}).
In particular, the velocity field $[\vu]_R$ admits spatial derivatives of any order and $\vr$ is a smooth function
bounded below away from zero uniformly $\mathcal{P}-$a.s.
Our goal is to solve the stochastic equation
\begin{equation} \label{S2}
\D c + [\vc{u}]_R \cdot \Grad c \dt = \frac{1}{\vr} \Del \mu \dt + \sigma(c) \D W,
\ \mu = \frac{\partial f(\vr, c) }{\partial c} - \frac{1}{\vr} \Del c,
\ c(0, \cdot) = c_0.
\end{equation}

\subsection{A priori bounds}
\label{SS1}

We start with {\it a priori} bounds available for regular solutions of problem (\ref{S2}). Accordingly, the standard It\^{o}'s calculus can be applied
without modifications.

\subsubsection{Basic energy bound}

 Applying It\^{o}'s chain rule we get
\begin{equation} \label{S7}
\begin{split}
\D \intTN{ \frac{1}{2} c^2 } &= \intTN{ \frac{1}{2} \Div [\vu]_R c^2 } \dt + \intTN{ \frac{1}{\vr} \Del \mu c } \dt \\
&+ \intTN{ \frac{1}{2} \sum_{k=1}^\infty { \alpha_k^2} |\sigma_k(c) |^2 } \dt + \intTN{ c \sigma(c) } \ \D W.
\end{split}
\end{equation}
Furthermore, we rewrite
\[
\intTN{ \frac{1}{\vr} \Del \mu c } \dt = \intTN{ \left( \frac{ \partial f(\vr, c) }{\partial c} - \frac{1}{\vr} \Del c \right) \Del \frac{c}{\vr} } \dt,
\]
where
\[
\Del \frac{c}{\vr} = \frac{1}{\vr} \Del c - 2 \frac{ \Grad c \cdot \Grad \vr }{\vr^2} - c \Div \frac{\Grad \vr}{\vr^2}.
\]
Consequently, (\ref{S7}) gives rise to
\[
\begin{split}
\left[ \intTN{ c^2 } \right]_{t = 0}^{t = \tau} & + \int_0^\tau \intTN{ |\Del c |^2 } \dt \aleq \int_0^\tau \intTN{ c^2 } \dt\\
&+ \int_0^\tau \intTN{ |\Del c| \left( |c| + |\Grad c| \right) } \dt
+ \int_0^\tau \intTN{ \frac{\partial f(\vr, c) }{\partial c} \Del \frac{c}{\vr} }\dt\\ &+
\int_0^\tau \intTN{ \left( |c | + c^2 \right) } \ \D W.
\end{split}
\]

Next, using hypothesis (\ref{H1}) we obtain
\begin{equation} \label{S9}
\begin{split}
\left[ \intTN{ c^2 } \right]_{t = 0}^{t = \tau} & + \int_0^\tau \intTN{ |\Del c |^2 } \dt \aleq \int_0^\tau \intTN{ c^2 } \dt\\
&+ \int_0^\tau \intTN{ \left( |\Del c| + |\Grad c| \right) \left( |c| + |\Grad c| \right) } \dt
+
\int_0^\tau \intTN{ \left( |c | + c^2 \right) } \ \D W.
\end{split}
\end{equation}

Finally, we use the interpolation inequality
\[
\| \Grad c \|^2_{L^2(\TN)} \leq \delta \| \Del c \|_{L^2(\TN)}^2 + c(\delta) \| c \|_{L^2(\TN)}^2 \ \mbox{for any}\ \delta > 0,
\]
to deduce from (\ref{S9}) that
{ \begin{equation} \label{S11}
\begin{split}
\left[ \intTN{ c^2 } \right]_{t = 0}^{t = \tau} + \int_0^\tau \intTN{ |\Del c |^2 } \dt \aleq \int_0^\tau \intTN{ c^2 } \dt
+
\int_0^\tau \intTN{ \left( |c | + c^2 \right) } \ \D W.
\end{split}
\end{equation}
}
Thus passing to expectations in (\ref{S11}) and applying successively Burkholder--Davis--Gundy inequality and Gronwall's lemma we
may infer that
\begin{equation} \label{S12}
\expe{ \sup_{t \in [0,T]} \left( \intTN{ c^2 } \right)^\beta } + \expe{ \left( \int_0^T \intTN{ | \Del c |^2 } \dt \right)^{\beta} }
\aleq \expe{ \left( \intTN{ c_0^2 } \right)^\beta } \ \mbox{for any}\ \beta \geq 1.
\end{equation}

\subsubsection{{ First} order estimates}

{ In order to obtain higher order estimates for the concentration $c$, we apply the It\^o formula  to $|\nabla_x c|^2_{L^2(\TN)}.$}
After a routine manipulation, we obtain
\[
\begin{split}
\D \intTN{ \frac{1}{2} | \Grad c|^2 } &= \intTN{ \left( \frac{1}{2} \Div [\vu]_R |\Grad c|^2 - \Grad [\vu]_R : (\Grad c \otimes \Grad c ) \right)  } \dt
- \intTN{ \frac{1}{\vr} \Del \mu \Del c } \dt \\
&- \intTN{ \frac{1}{2} \sum_{k=1}^\infty { \alpha_k^2} \sigma_k(c) \Del \sigma_k(c)  } \dt - \intTN{ \sigma(c) \Del \sigma (c) } \ \D W,
\end{split}
\]
where
\[
\begin{split}
- \intTN{ \frac{1}{\vr} \Del \mu \Del c } &= \intTN{ \frac{1}{\vr} \Del \left( \frac{1}{\vr} \Del c \right) \Del c }
- \intTN{ \frac{1}{\vr} \Del \left( \frac{\partial f(\vr, c)}{\partial c} \right) \Del c }\\
&= - \intTN{ \left|  \Grad \left( \frac{1}{\vr} \Del c \right) \right|^2 }
+ \intTN{ \Grad \left( \frac{\partial f(\vr, c)}{\partial c} \right) \cdot \Grad \left( \frac{1}{\vr} \Del c \right) }.
\end{split}
\]

Thus, using hypotheses (\ref{M8}), (\ref{H1}) and repeating the arguments of the preceding section, we obtain
\begin{equation} \label{S13}
\expe{ \sup_{t \in [0,T]} \left( \intTN{ |\Grad c |^2  } \right)^\beta } + \expe{ \left( \int_0^T \intTN{ | \Grad \Del c |^2 } \dt \right)^{\beta} }
\aleq \expe{ \left( \intTN{ |\Grad c_0|^2 } \right)^\beta }
\end{equation}
for any $\beta \geq 1$.

\subsubsection{ Higher order estimates}

The final step consists in { applying the It\^o formula to  $|\Delta_x c|^2_{L^2(\TN)}$}:
\[
\begin{split}
\D \intTN{ \frac{1}{2} |\Del  c|^2 } &= - \intTN{ \Del (\Div [\vu]_R c ) \Del c } \dt + \intTN{ \frac{1}{\vr} \Del \mu \Del^2 c } \dt \\
&+ \intTN{ \frac{1}{2} \sum_{k=1}^\infty { \alpha_k^2} \sigma_k(c) \Del^2 \sigma_k (c) } \dt + \intTN{ \sigma(c) { \Del^2 c} }\ \D W.
\end{split}
\]

The most difficult term reads
\[
\intTN{ \frac{1}{\vr} \Del \mu \Del^2 c } = - \intTN{ \frac{1}{\vr} \Del \left( \frac{1}{\vr} \Del c \right) \Del^2 c } +
\intTN{ \frac{1}{\vr} \Del \left( \frac{\partial f(\vr,c) }{\partial c} \Del^2 c \right) },
\]
where
\[
\begin{split}
- &\intTN{ \frac{1}{\vr} \Del \left( \frac{1}{\vr} \Del c \right) \Del^2 c } \\ &\approx - \intTN{ \frac{1}{\vr^2} |\Del^2 c|^2 }
+ \intTN{ \frac{1}{\vr} D^2_x \left( \frac{1}{\vr} \right) \Del c \Del^2 c }
+ \intTN{ \frac{1}{\vr} D_x \vr (D_x \Del) c \Del^2 c }.
\end{split}
\]
Seeing that the third derivatives of $c$ are controlled by (\ref{S13}) we conclude
\begin{equation} \label{S14}
\expe{ \sup_{t \in [0,T]} \left( \intTN{ |\Del c |^2  } \right)^\beta } + \expe{ \left( \int_0^T \intTN{ | \Del^2 c |^2 } \dt \right)^{\beta} }
\aleq \expe{ \left( \intTN{ |\Del c_0|^2 } \right)^\beta }
\end{equation}
for any $\beta \geq 1$.

\subsection{Solvability of the approximate system}

Now we are ready to establish the existence of solutions to the approximate system (\ref{A5}--\ref{A7}). Unfortunately, this cannot be done in a direct fashion.
Instead, similarly to (\ref{A6}), the Cahn--Hilliard equation (\ref{S2}) will be approximated via a finite system of stochastic ODE's.

\subsubsection{Finite--dimensional approximation}

We replace (\ref{S2}) by
\begin{equation} \label{L2}
\D c + \Pi_n \left[ [\vu]_R \cdot \Grad c \right] \dt = \Pi_n \left[ \frac{1}{\vr} \Del \left( \frac{\partial f(\vr,c)}{\partial c} - \frac{1}{\vr} \Del c \right)
\right] \dt  + \Pi_n [\sigma (c) ] \D W, \ c(0, \cdot) = \Pi_n c_0,
\end{equation}
where $\Pi_n$ is the $L^2-$orthogonal projection onto a finite dimensional space $X_n$ formed by trigonometric polynomials of finite order.
In particular,
$\Pi_n$ commutes with spatial derivatives of any order.
Our goal is to solve the system (\ref{A5}), (\ref{A6}), (\ref{L2}) for fixed $m$, $n$, $R > 0$, and
$\ep > 0$. To this end we use the abstract result due to It\^{o} and Nisio \cite{ItNo}.

For a fixed $c \in C([0,T]; X_n)$, we consider the unique solution $[\vr, \vu] \equiv \vc{Q} [c]$ of the problem
\begin{equation} \label{L3}
\D \vr + \Div (\vr [\vu]_R) \dt = \ep \Del \vr \dt, \ \vr (0, \cdot) = \vr_0,
\end{equation}
\begin{equation} \label{L4}
\begin{split}
\D \Pi_m (\vr \vu) + \Div \Pi_m (\vr \vu \otimes [\vu]_R ) \dt &+ \chi \left( \| \vu \|_{H_m} - R \right) \Grad \Pi_m
\left[ p(\vr,c)  + \sqrt{\ep} \vr^\alpha \right] \dt\\ &= \ep \Del \Pi_m (\vr \vu) \dt  + \Div \Pi_m \mathbb{S}(c, \Grad \vu) \dt \\&-
\chi \left( \| \vu \|_{H_m} - R \right) \Div \Pi_m \left( \Grad c \otimes \Grad c - \frac{1}{2} |\Grad c |^2 \right) \dt,\\
\Pi_m (\vr \vu)(0, \cdot) &= \Pi_m (\vr_0 \vu_0).
\end{split}
\end{equation}
{ Here $\alpha$ is a positive constant that will be chosen greater than $4$ for reasons  that will become obvious in Section~$5$. }As already pointed out, the system (\ref{L3}), (\ref{L4}) is deterministic and can be solved pathwise. As shown in \cite[Chapter 7]{EF70}, there exists a solution $\vr$, $\vu$, uniquely determined in the class
\begin{equation} \label{L5}
\vu \in C([0,T]; H_{ m}); \ \vr, { \frac{1}{\vr} }\in C^1([0,T]; C^\nu(\TN)) \cap C([0,T]; C^{2 + \nu}(\TN))
\end{equation}
by the initial data $\vu_0 \in L^2(\TN; R^N)$, $\vr_0 \in C^{2 + \nu}(\TN)$, { $\vr_0 \geq \underline{\vr}> 0$ with $\underline{\vr}$ some positive deterministic constant}, and the forcing represented by $c$.
Moreover, it can be shown that the mapping
\[
\vc{Q}: c \in C([0,T]; X_n) \mapsto [\vr, \vu] \in { [ C^1([0,T]; C^\nu(\TN)) \cap C([0,T]; C^{2 + \nu}(\TN))] \times  C([0,T]; H_{ m})
}
\]
is continuous. In addition, such a mapping is obviously \emph{casual},
meaning the values of $[\vr, \vu]$ on a time interval $[0, \tau]$, $0 \leq \tau \leq T$ depend only on the values of $c$ in $[0, \tau]$.

Consequently, the problem (\ref{L2}) can be written in the form
\begin{equation} \label{L2a}
\begin{split}
\left[ c \right]_{t=0}^{t= \tau} &+ \int_0^\tau \Pi_n \left[ [ Q^2 [c] ]_R \cdot \Grad c \right] \dt \\
&= \int_0^\tau \Pi_n \left[ \frac{1}{Q^1[c]} \Del \left( \frac{\partial f(Q^1[c],c)}{\partial c} - \frac{1}{Q^1[c]} \Del c \right)
\right] \dt  + \int_0^\tau \Pi_n [\sigma (c) ] \D W.
\end{split}
\end{equation}
Thanks to the properties of the cut--off operators $[\cdot]_R$ and our choice of the initial data for $\vr$, we check easily that
\[
\left\| \Pi_n \left[ [ Q^2 [c] ]_R \cdot \Grad c \right] (\tau) \right\|_{X_n} +
\left\| \Pi_n \left[ \frac{1}{Q^1[c]} \Del \left( \frac{\partial f(Q^1[c],c)}{\partial c} - \frac{1}{Q^1[c]} \Del c \right)
\right](\tau) \right\|_{X_n} \leq { k(m, n, R)} \| c(\tau) \|_{X_n},
\]
{ where $k(m, n, R)$ is a positive constant that may depend on $m$, $n$ and $R$.}
Thus we may apply
the abstract result of It\^{o} and Nisio \cite[Theorem 1]{ItNo} on stochastic differential equations with memory to deduce the existence of a solution
$[\vr, \vu, c]$ of the problem (\ref{L2}), (\ref{L3}), and (\ref{L4}).

\begin{Proposition} \label{P1}
Let $m$, $n$, $\ep > 0$, and $R > 0$ be given. Suppose that the distribution of the initial data $[\vr_0, \vu_0, c_0]$ is determined by the law
$\Lambda_0$ specified in (\ref{A1}).

Then the problem (\ref{L2}--\ref{L4}) admits a solution $[\vr, \vu, c]$ in the class
\[
\vu \in C^1([0,T]; H_m), \ \vr, \ \frac{1}{\vr} \in C^1([0,T]; C^\nu(\TN)) \cap C([0,T]; C^{2 + \nu}(\TN), \ c \in C([0,T]; X_n).
\]
In addition, the process $\tau \mapsto [\vr(\tau, \cdot), \vu (\tau, \cdot), c(\tau,\cdot)]$ is $\mathfrak{F}_t-$progressively measurable.

\end{Proposition}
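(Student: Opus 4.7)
The plan is to decouple the system by exploiting the pathwise nature of the $\vr$-$\vu$ subsystem and then reformulating the remaining Cahn--Hilliard equation as a finite-dimensional stochastic equation with memory on $X_n$, to which the It\^{o}--Nisio existence theorem is directly applicable.

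First, I would establish in detail the properties of the solution operator $\vc{Q}$. For a fixed path $c \in C([0,T]; X_n)$, the system (\ref{L3}), (\ref{L4}) is a deterministic parabolic--ODE coupling in which the transported density $\vr$ satisfies a strictly parabolic equation with the smooth, cut-off velocity $[\vu]_R$, so a maximum-principle argument together with the regularity of $\vr_0$ (recall (\ref{A1})) yields both the upper bound and a strictly positive lower bound on $\vr$ in the class (\ref{L5}); this is where the hypothesis $0<C^1_\ep \le \vr_0$ is essential to ensure that $1/\vr$ stays in $C([0,T];C^{2+\nu}(\TN))$. The momentum equation, after projection onto the finite-dimensional space $H_m$, becomes an ODE in $H_m$ with locally Lipschitz right-hand side rendered globally Lipschitz by the cut-offs $\chi(\|\vu\|_{H_m}-R)$ and $[\vu]_R$. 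Invoking \cite[Chapter 7]{EF70} gives both existence, uniqueness, and the announced continuity and causality of $\vc{Q}:c\mapsto[\vr,\vu]$.

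Next, I would substitute $[\vr,\vu]=\vc{Q}[c]$ into (\ref{L2}) to obtain the closed equation (\ref{L2a}) for $c$ alone, taking values in the finite-dimensional space $X_n$. The drift
\[
\vc{F}[c](\tau) = -\Pi_n\left[ [Q^2[c]]_R \cdot \Grad c\right] + \Pi_n\left[ \frac{1}{Q^1[c]} \Del\!\left( \frac{\partial f(Q^1[c],c)}{\partial c} - \frac{1}{Q^1[c]}\Del c\right)\right]
\]
is, by continuity and causality of $\vc{Q}$ and the bounds collected in (\ref{L5}), a causal continuous map $C([0,T];X_n)\to C([0,T];X_n)$, and the estimate recorded just before the Proposition statement,
\[
\|\vc{F}[c](\tau)\|_{X_n} \leq k(m,n,R)\,\|c(\tau)\|_{X_n},
\]
supplies the linear-growth bound required by It\^{o}--Nisio. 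The diffusion coefficient $c\mapsto \Pi_n[\sigma(c)]$ inherits continuity and at-most-linear growth from the assumptions (\ref{M8}) on $\sigma_k$.

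With these hypotheses verified, the abstract existence theorem of It\^{o} and Nisio \cite[Theorem 1]{ItNo} applied on the finite-dimensional space $X_n$ produces, on some stochastic basis carrying $W$, an adapted process $c\in C([0,T];X_n)$ solving (\ref{L2a}); setting $[\vr,\vu]:=\vc{Q}[c]$ via the deterministic operator then recovers a triple satisfying (\ref{L2})--(\ref{L4}) in the regularity classes stated. Progressive measurability of $c$ follows from the adaptedness given by the It\^{o}--Nisio construction together with path continuity; progressive measurability of $[\vr,\vu]$ follows from that of $c$ and the causality plus continuity of $\vc{Q}$, as noted after (\ref{S1}).

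The main obstacle I anticipate is the singular factor $1/\vr$ in both the drift and the chemical potential: keeping it under control requires that the strict positivity of $\vr$ be preserved along the entire iteration, which in turn demands a \emph{deterministic} lower bound on $\vr$ depending only on $m,n,R$ (not on $\omega$) so that $k(m,n,R)$ in the linear-growth estimate is genuinely deterministic as It\^{o}--Nisio requires. This is delivered by the parabolic maximum principle applied to (\ref{L3}) and the assumption $\vr_0\geq C^1_\ep>0$ in (\ref{A1}), which is precisely why such restrictive initial conditions are imposed at this approximate level.
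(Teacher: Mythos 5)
Your proposal is correct and follows essentially the same route as the paper: solve the deterministic subsystem (\ref{L3})--(\ref{L4}) pathwise to obtain the continuous, causal operator $\vc{Q}$, rewrite (\ref{L2}) as the closed equation (\ref{L2a}) with memory, verify the deterministic linear-growth bound $k(m,n,R)$, and invoke It\^{o}--Nisio. Your added emphasis on the parabolic maximum principle securing a deterministic lower bound on $\vr$ (hence on $1/\vr$) is exactly the point the paper delegates to the hypotheses (\ref{A1}) and to \cite[Chapter 7]{EF70}.
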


\subsubsection{The Galerkin limit in the Cahn--Hilliard system}
\label{SUB}

The ultimate goal of this part is to perform the limit $n \to \infty$ in the system (\ref{L2}--\ref{L4})
to recover a solution of the approximate problem (\ref{A5}--\ref{A7}).
We denote $\{ [\vrn, \vun, \cn] \}_{n=1}^\infty$ the family of solutions, the existence of which is guaranteed by Proposition \ref{P1}.

To begin, we establish uniform estimates independent of $n \to \infty$.
As the projections $\Pi_n$ commute with the Laplace operator, we may reproduce the bounds obtained in Section \ref{SS1} also at the level of Galerkin approximations. Summing up (\ref{S12}--\ref{S14}) we therefore deduce
\begin{equation} \label{L6}
\expe{ \left( \sup_{t \in [0,T]} \| \cn \|^2_{W^{2,2}(\TN)} \right)^\beta } +
\expe{ \left( \int_0^T \| \cn \|^2_{W^{4,2}(\TN)} \right)^\beta } \aleq
\expe{ \| c_0 \|^{2\beta}_{W^{2,2}(\TN)}  },
\end{equation}
where $\aleq$ hides a constant depending on the parameters $R$, $m$, as well as on the higher order norms of the initial data $\vr_0$ guaranteed by our
choice of the initial law $\Lambda_0$. By the same token,
the densities $\vr_n$ remain bounded in the space
\begin{equation} \label{L7}
C^1([0,T]; C^\nu(\TN)) \cap C([0,T]; C^{ 2+\nu}(\TN)) ,\ \vr_n \geq C^1_\ep > 0,
\end{equation}
uniformly by a deterministic constant.

Next, $\vun$ being determined by (\ref{L4}) will satisfy
\begin{equation} \label{L8}
\sup_{t \in [0,T]} \left( \| \vun \|_{ H_m} + \| \partial_t \vun \|_{ H_m} \right)
\aleq \| \vu_0 \|_{L^2(\TN; R^N)} + \int_0^T \| \cn \|^2_{W^{4,2}(\TN)} \dt \ \mbox{a.s.}
\end{equation}

Finally, we need time regularity of the processes $\cn$. To this end, we use equation (\ref{L2}) to compute
\[
\begin{split}
\left[ \intTN{ \cn \phi } \right]_{t = \tau_1}^{t = \tau_2} &=
\int_{\tau_1}^{\tau_2} \intTN{\left[ \frac{1}{\vrn} \Del \left( \frac{ \partial f(\vrn, \cn) }{\partial \vr} - \frac{1}{\vrn}
\Del \cn \right) - [\vun]_R \cdot \Grad \cn \right] \phi } \dt\\
&+ \int_{\tau_1}^{\tau_2} \left( \intTN{ \sigma (\cn) \phi } \right) \D W \ \mbox{for any}\ \phi \in X_n.
\end{split}
\]
By Burkholder--Davis--Gundy inequality,
\[
\expe{ \sup_{\tau_1 \leq \tau \leq \tau_2} \left| \int_{\tau_1}^{\tau} \left( \intTN{ \sigma (\cn) \phi } \right) \D W \right|^\beta
} \leq c(\beta) \expe{ \left( \int_{\tau_1}^{\tau_2} \sum_{k=1}^\infty \alpha^2_k \left( \intTN{ \sigma_k (\cn) \phi } \right)^2  \right)^{\beta/ 2} }
\]
for any $\beta > 0$, where, furthermore,
\[
\expe{ \left( \int_{\tau_1}^{\tau_2} \sum_{k=1}^\infty \alpha^2_k \left( \intTN{ \sigma_k (\cn) \phi } \right)^2  \right)^{\beta/ 2} }
\leq |\tau_2 - \tau_1 |^{\beta/2} \| \phi \|^{ \beta}_{L^2(\TN)} \expe{ { \left(\sum_{k=1}^\infty \alpha_k^2\right)^{\beta/2} }\sup_{ \tau \in [0,T] }
\| \sigma_k (\cn ) \|^\beta_{L^2(\TN)} }.
\]
Consequently, we can use the bounds imposed by $\Lambda_0$ and apply Kolmogorov continuity criterion to conclude that
$\cn \in C^{\omega}([0,T]; L^2(\TN))$
with
\begin{equation} \label{L9}
\expe{ \| \cn \|_{C^\omega([0,T]; L^2(\TN))}^\beta } \leq c(\beta), \ \beta > 2 \ \mbox{for a certain}\ 0 < \omega < \frac{1}{2}.
\end{equation}

With the bounds established in (\ref{L6}), (\ref{L7}), (\ref{L8}), and (\ref{L9}) at hand, we apply the stochastic compactness method to
perform the limit $n \to \infty$. The basic tool is the following extension of the classical Skorokhod theorem by Jakubowski \cite{Jakub}.

\begin{Theorem} \label{T1}
Let $\mathfrak{X}$ be a topological { sub-Polish} space, {meaning there exists} a countable family of continuous functions that separate points.
Let $\{ \mathfrak{L}_n \}_{n=1}^\infty$ be a tight sequence of Borel probability measures on $\mathfrak{X}$.

Then there { exists} a subsequence $\{ \mathfrak{L}_{n_k} \}_{k=1}^\infty$ and a sequence $\{ U_k \}_{k=1}^\infty$ of random variables
ranging in $\mathfrak{X}$ defined on the standard probability space $\left\{ [0,1], \mathfrak{B}[0,1], \dx \right\}$ such that
\begin{itemize}
\item the law of $U_k$ is $\mathfrak{L}_{n_k}$;
\item there {exists} a random variable $U$ such that $U_k(\omega) \to U(\omega)$ for any $\omega \in [0,1]$;
\item the law of $U$ is a Radon measure on $\mathcal{X}$.
\end{itemize}

\end{Theorem}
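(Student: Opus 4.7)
The plan is to reduce Theorem \ref{T1} to the classical Skorokhod representation theorem on a Polish space, by building an explicit continuous injection from $\mathfrak{X}$ into a compact Polish space using the countable separating family. Concretely, let $\{ f_j \}_{j=1}^\infty$ be the countable family of continuous real--valued functions separating points of $\mathfrak{X}$, and set
\[
F : \mathfrak{X} \to [-1,1]^{\mathbb{N}}, \qquad F(x) = \left\{ \frac{2}{\pi} \arctan f_j(x) \right\}_{j=1}^\infty .
\]
Then $F$ is continuous and injective, and the target $[-1,1]^{\mathbb{N}}$ is a compact metrizable, hence Polish, space.

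The first step is to push forward the laws, setting $\widetilde{\mathfrak{L}}_n = \mathfrak{L}_n \circ F^{-1}$. Compactness of the target makes $\{ \widetilde{\mathfrak{L}}_n \}$ automatically tight, so along a subsequence $\widetilde{\mathfrak{L}}_{n_k}$ converges weakly to some $\widetilde{\mathfrak{L}}$. The classical Skorokhod theorem then produces $[-1,1]^{\mathbb{N}}$--valued random variables $\widetilde{U}_k, \widetilde{U}$ on the standard Lebesgue space $\left\{ [0,1], \mathfrak{B}[0,1], \dx \right\}$ with the correct marginal laws and $\widetilde{U}_k(\omega) \to \widetilde{U}(\omega)$ pointwise on $[0,1]$.

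The main obstacle is to lift $\widetilde{U}_k$ back from $[-1,1]^{\mathbb{N}}$ into $\mathfrak{X}$: the image $F(\mathfrak{X})$ need not even be Borel in the target, and $F^{-1}$ admits no global continuous extension. Here the tightness of the original sequence is decisive. I would select compact sets $K_m \subset \mathfrak{X}$ with $\mathfrak{L}_n(K_m) \geq 1 - 1/m$ uniformly in $n$; on each $K_m$ the restriction $F|_{K_m}$ is a continuous bijection from a compact space onto its image, hence a homeomorphism. Setting $K_\infty = \bigcup_m K_m$, the image $F(K_\infty)$ is $\sigma$--compact, hence Borel in $[-1,1]^{\mathbb{N}}$, and carries full mass for every $\widetilde{\mathfrak{L}}_{n_k}$; by the Portmanteau theorem applied to the closed sets $F(K_m)$, it also carries full mass for $\widetilde{\mathfrak{L}}$. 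On the intersection of the full--measure events $\{\widetilde{U}_k(\omega) \in F(K_\infty)\ \text{for all}\ k\}$ and $\{\widetilde{U}(\omega) \in F(K_\infty)\}$ I define
\[
U_k(\omega) = \bigl(F|_{K_\infty}\bigr)^{-1}\bigl(\widetilde{U}_k(\omega)\bigr), \qquad U(\omega) = \bigl(F|_{K_\infty}\bigr)^{-1}\bigl(\widetilde{U}(\omega)\bigr),
\]
extending arbitrarily off this set.

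Finally I would verify pointwise convergence $U_k(\omega) \to U(\omega)$ in $\mathfrak{X}$. For $\omega$ as above, $\widetilde{U}(\omega) \in F(K_{m_0})$ for some $m_0$; by the uniform tightness already used for $\mathfrak{L}_{n_k}$, the values $\widetilde{U}_k(\omega)$ eventually lie in a common $F(K_{m_1})$. On the compact $K_{m_0} \cup K_{m_1}$, $F$ is a homeomorphism, so the convergence $\widetilde{U}_k(\omega) \to \widetilde{U}(\omega)$ in $[-1,1]^{\mathbb{N}}$ transports into $U_k(\omega) \to U(\omega)$ in $\mathfrak{X}$. The Radon property of the limit law is automatic from concentration on the $\sigma$--compact set $K_\infty$. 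The delicate point throughout is the bookkeeping that lets one discard the exceptional null sets (where $\widetilde{U}(\omega)$ misses $F(K_\infty)$, or where the subsequential tightness bound fails for a particular $k$) without destroying either the almost--sure pointwise convergence or the identity of the pushforward laws of $U_k$ with $\mathfrak{L}_{n_k}$.
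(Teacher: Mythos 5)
The paper does not actually prove Theorem \ref{T1}: it is quoted from Jakubowski \cite{Jakub}, so the only comparison available is with his original argument. Your overall strategy --- embed $\mathfrak{X}$ into $[-1,1]^{\mathbb{N}}$ via the separating family, push the laws forward, apply the classical Skorokhod theorem there, and pull back through the compacts supplied by tightness --- is the natural first attempt, and everything up to and including the construction of $U_k$ on the full-measure event (measurability of $(F|_{K_\infty})^{-1}$ as a countable union of continuous pieces, identification of the laws, Radon property of the limit law) is sound.

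The gap is in the final step, and it is precisely the point that makes Jakubowski's theorem nontrivial. You assert that ``by the uniform tightness already used for $\mathfrak{L}_{n_k}$, the values $\widetilde{U}_k(\omega)$ eventually lie in a common $F(K_{m_1})$.'' Tightness only gives the marginal bounds $\mathbb{P}\{\widetilde{U}_k \notin F(K_m)\} \leq 1/m$ uniformly in $k$; it says nothing about the path $k \mapsto \widetilde{U}_k(\omega)$ for a fixed $\omega$. Since $\sum_k 1/m = \infty$ for each fixed $m$, a second Borel--Cantelli type situation is entirely consistent with these bounds: the classical Skorokhod construction on $[-1,1]^{\mathbb{N}}$ gives no control of the joint behaviour across $k$ beyond the marginals and convergence in the product metric, and it may return variables for which, for a.e.\ $\omega$ and every $m$, the point $\widetilde{U}_k(\omega)$ leaves $F(K_m)$ infinitely often. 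When that happens your argument collapses: $(F|_{K_\infty})^{-1}$ is only Borel, not continuous, on the $\sigma$-compact set $F(K_\infty)$, so convergence of $\widetilde{U}_k(\omega)$ in $[-1,1]^{\mathbb{N}}$ does not transfer to convergence of $U_k(\omega)$ in $\mathfrak{X}$. (Think of $\mathfrak{X} = L^2$ with the weak topology and $f_j$ the coordinate functionals, which is exactly the kind of space this theorem is used for in the paper: coordinatewise convergence of $\widetilde{U}_k(\omega)$ implies weak convergence of $U_k(\omega)$ only when the norms stay bounded, i.e.\ only when the path is eventually confined to a single ball $K_m$.) Jakubowski's proof avoids this by not pulling back a ready-made representation: after a further extraction along which the restricted measures $\mathfrak{L}_{n_k}|_{K_m}$ converge for every $m$, he constructs the variables on $[0,1]$ directly, layer by layer over the compacts $K_m$, so that eventual confinement to some $K_{m(\omega)}$ is built into the construction rather than deduced afterwards. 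Some additional idea of this kind is indispensable; the reduction to the classical theorem alone does not suffice.
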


We consider a sequence of random variables $[\vrn, \vun, \cn, W]$ ranging in the pathspace $\mathfrak{X}$,
\[
\mathfrak{X} = \mathfrak{X}_\vr \times \mathfrak{X}_\vu \times \mathfrak{X}_c \times \mathfrak{X}_W,
\]
where
\[
\mathfrak{X}_\vr = C([0,T]; C^{2 + \mu}(\TN)) \cap C^\mu ([0,T] \times \TN),\
\mathfrak{X}_\vu = C^\mu ([0,T]; H_m),
\]
\[
\mathfrak{X}_c = C^\mu ([0,T]; L^2(\TN)) \cap \left[ L^\infty (0,T; W^{2,2}(\TN)); {\rm weak-}(*) \right] \cap
\left[L^2 (0,T; W^{4,2}(\TN)); {\rm weak} \right],
\]
\[
\mathfrak{X}_W = C([0,T]; \mathfrak{U}_0).
\]

Now it follows from the uniform bounds (\ref{L6}--\ref{L9}) that the sequence of joint laws\\
$\left\{ \mathfrak{L} [\vrn, \vun, \cn, W] \right\}_{n=1}^\infty$ is tight in $\mathfrak{X}$ for some $\mu > 0$. Applying Theorem \ref{T1}, we may pass to a subsequence,
change the probability space, and find a new family
of random variables $[\tilde{\vr}_n, \tilde{\vu}_n, \tilde{c}_n, \tilde{W}_n]$ converging to a limit $[\vr, \vu, c, W]$ in the topology of
$\mathfrak{X}$ a.s.

As the random variables $[\vrn, \vun, \cn, W]$ and $[\tilde{\vr}_n, \tilde{\vu}_n, \tilde{c}_n, \tilde{W}_n]$ have the same
distribution (law) in $\mathfrak{X}$, the the quantity $[\tilde{\vr}_n, \tilde{\vu}_n, \tilde{c}_n, \tilde{W}_n]$
satisfies the same system of equations (\ref{L2}--\ref{L4}), cf. \cite[Theorem 2.9.1]{BrFeHobook}.

To continue, we need the concept of \emph{random distribution} introduced in \cite[Chapter 2, Section 2.2]{BrFeHobook}. These are random variables
ranging in the space of distributions $\mathcal{D}'(Q_T)$, meaning the mapping
\[
\omega \in \Omega \to v(\omega) \in \mathcal{D}'(Q_T)
\]
is a random distribution, if the function $\left< v; \varphi \right>$ is $\mathcal{P}-$measurable for any $\varphi \in \DC(Q_T)$.
We introduce the history $\{ \sigma_t [v] \}_{t \geq 0}$ of a random distribution $v$ as the smallest complete right--continuous filtration such that
\[
\left\{ \omega \in \Omega \ \Big| \ \left< v, \varphi \right> \leq \lambda, \ \lambda \in R,\
\varphi \in \DC(Q_{\tau}) \right\} \subset \sigma_\tau \ \mbox{for any}\ \tau \in [0,T].
\]

To perform the limit in the stochastic integral, we report the following lemma, see \cite[Lemma 2.6.6]{BrFeHobook}:

\begin{Lemma} \label{LL1}
Let $( \Omega, \mathfrak{F}, \mathcal{P} )$
be a complete probability space. let $W_n$ be an $(\mathfrak{F}^n_t)$-cylindrical Wiener process and
let ${G}_n$ be an  $( \mathfrak{F}^n_t)$-progressively measurable stochastic process satisfying
\[
G_n\in L^2(0,T; L_2(\mathfrak{U};W^{\ell,2}(\TN))) \ \mbox{a.s.}
\]
Suppose that
\[
W_n \to W \ \mbox{in}\ C([0,T]; \mathfrak{U}_0) \ \mbox{in probability},
\]
\[
{G}_n \to {G} \ \mbox{in}\ L^2(0,T; L^2(\mathfrak{U}; W^{\ell,2}(\TN))) \  \mbox{in probability},
\]
where $W = \left\{ \beta_k \right\}_{k=1}^\infty$.
Let $( \mathfrak{F}_t )_{t \geq 0}$ be the filtration given as
\[
\mathfrak{F}_t = \sigma \big( \cup_{k = 1}^ \infty \sigma_t[ G_k ] \cup \sigma_t [\beta_k] \big).
\]

Then, after a possible change on a set of zero measure in $\Omega \times (0,T)$,  ${G}$ is
$( \mathfrak{F}_t )$-progressively measurable, and
\[
\int_0^\cdot {G}_n \, \D W_n \to \int_0^\cdot {G} \, \D W \ \mbox{in}\ L^2(0,T; W^{\ell,2}(\TN)) \ \mbox{in probability.}
\]

\end{Lemma}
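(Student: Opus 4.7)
The plan is to carry out the standard identification-of-limit procedure for stochastic integrals in the spirit of Bensoussan and Debussche--Glatt-Holtz--Ziane, adapted to the situation where both the integrand and the driving process vary.

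\textbf{Step 1: Progressive measurability of $G$.} Extracting a subsequence from the convergence in probability, I may assume $G_n\to G$ pointwise a.e.\ on $\Omega\times(0,T)$ with values in $L_2(\mathfrak{U};W^{\ell,2}(\TN))$. By the very definition of $(\mathfrak{F}_t)$, each $G_n$ is $(\mathfrak{F}_t)$-progressively measurable because $\sigma_t[G_n]\subset\mathfrak{F}_t$. Since the a.e.\ limit of progressively measurable processes is progressively measurable modulo a null set, after redefining $G$ on a $\mathrm{prog}$-null set I obtain that $G$ is $(\mathfrak{F}_t)$-progressively measurable. In particular, the Itô integral $\int_0^\cdot G\,\D W$ is well defined with respect to the enlarged filtration on which $W$ remains a cylindrical Wiener process.

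\textbf{Step 2: Truncation to an $L^2$-bounded setting.} Since the hypotheses only give convergence in probability, I would first localize. For $L\geq 1$ define the stopping times
\[
\tau_L^n = \inf\Big\{ t\in [0,T] \ \Big|\ \int_0^t \|G_n(s)\|^2_{L_2(\mathfrak{U};W^{\ell,2})}\,\D s \geq L \Big\},
\]
with $\inf\emptyset = T$, and analogously $\tau_L$ for $G$; set $G_n^L = G_n\ind_{[0,\tau_L^n)}$ and $G^L = G\ind_{[0,\tau_L)}$. Then $G_n^L$ is uniformly bounded in $L^2(\Omega; L^2(0,T; L_2(\mathfrak{U}; W^{\ell,2})))$ by $L$ and, along a subsequence, the convergence $G_n^L\to G^L$ in probability upgrades to $L^2(\Omega)$ by Vitali's theorem. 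Since the hypothesis $G_n\to G$ in probability implies $\sup_n \mathcal{P}(\tau_L^n<T)\to 0$ as $L\to\infty$, it suffices to prove the convergence of the truncated stochastic integrals and then let $L\to\infty$ via Chebyshev.

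\textbf{Step 3: Convergence for elementary integrands and density.} For the truncated, $L^2$-bounded processes, I approximate $G^L$ in $L^2(\Omega;L^2(0,T;L_2(\mathfrak{U};W^{\ell,2})))$ by a step process
\[
\widetilde{G}^{\eta} = \sum_{j=0}^{J-1} F_j\,\ind_{[t_j,t_{j+1})}, \qquad F_j \in L^2(\Omega,\mathfrak{F}_{t_j};W^{\ell,2}(\TN;L_2(\mathfrak{U}))),
\]
up to error $\eta$. For such step processes the stochastic integral reduces to the finite sum $\sum_j F_j(W(t_{j+1})-W(t_j))$, so the uniform convergence $W_n\to W$ in $C([0,T];\mathfrak{U}_0)$ in probability yields $\int_0^\cdot \widetilde{G}^\eta\,\D W_n\to\int_0^\cdot \widetilde{G}^\eta\,\D W$ in probability in $L^2(0,T;W^{\ell,2})$. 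The remaining errors $\int_0^\cdot (G_n^L-\widetilde{G}^\eta)\,\D W_n$ and $\int_0^\cdot (G^L-\widetilde{G}^\eta)\,\D W$ are controlled in $L^2(\Omega)$ by Itô's isometry, which gives the bound $C\eta$ uniformly in $n$. Letting first $n\to\infty$, then $\eta\to 0$, then $L\to\infty$ concludes the argument.

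\textbf{Main obstacle.} The central difficulty is that the integrators $W_n$ and $W$ differ, so one cannot apply Itô's isometry directly to $\int G_n\,\D W_n-\int G\,\D W$. This forces the three-step strategy above: localization to restore $L^2(\Omega)$-integrability, step-process approximation to exploit the pathwise convergence $W_n\to W$, and a careful filtration argument (Step 1) to guarantee that the candidate limit $\int G\,\D W$ is even well defined. All other computations are routine applications of the Burkholder--Davis--Gundy inequality.
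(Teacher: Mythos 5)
The paper does not prove this lemma at all: it is quoted from the monograph of Breit, Feireisl and Hofmanov\'a (cited there as Lemma 2.6.6), so there is no internal proof to compare against. Your argument is essentially the standard proof of that result --- localization by stopping times, approximation by step processes, It\^o isometry for the error terms, and pathwise convergence of the $W_n$ for the step-process part --- and the overall architecture is sound. Two points deserve attention. First, in Step 1 you assert in passing that ``$W$ remains a cylindrical Wiener process'' with respect to the enlarged filtration $(\mathfrak{F}_t)$. This is not automatic and is in fact the crux of the lemma: a priori the increments $W(t+s)-W(t)$ could fail to be independent of $\sigma_t[G_k]$. One must show, e.g.\ via the L\'evy martingale characterization, that each $\beta_k$ is an $(\mathfrak{F}_t)$-martingale with quadratic variation $t$ and vanishing cross-variations, which is obtained by passing to the limit in the corresponding martingale identities for $(\beta^n_k,\mathfrak{F}^n_t)$ using the convergences in probability together with uniform integrability. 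Without this step, $\int_0^\cdot G\,\D W$ is not even defined, so it cannot be waved through. Second, a minor technical point in Step 3: to exploit the convergence $W_n\to W$ in $C([0,T];\mathfrak{U}_0)$ you should take the approximating step processes to be of finite rank (only finitely many nonzero components $F_j e_k$), since for a general Hilbert--Schmidt $F_j$ the sum $\sum_k F_j e_k\,\bigl(\beta^n_k(t_{j+1})-\beta^n_k(t_j)\bigr)$ is not controlled by the $\mathfrak{U}_0$-norm of the increment. Finite-rank step processes are still dense in the relevant space, so this costs nothing. With these two repairs your proof matches the one in the cited reference.
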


With Lemma \ref{LL1} and the compactness properties of the space $\mathcal{X}$ at hand, it is not difficult to perform the limit
$n \to \infty$ in the sequence $[\tilde{\vr}_n, \tilde{\vu}_n, \tilde{c}_n, \tilde{W}_n]$. Besides the equations (\ref{L3}), (\ref{L4}), the
limit satisfies also (\ref{L2}) for \emph{any} $n=1,2,\dots$, meaning it satisfies the SPDE (\ref{A7}).
We have shown the following result:

\begin{Proposition} \label{P2}

Let $m$, $\ep > 0$, and $R > 0$ be given.
Suppose that the distribution of the initial data $[\vr_0, \vu_0, c_0]$ is determined by the law
$\Lambda_0$ specified in (\ref{A1}).

Then the approximate problem (\ref{A5}--\ref{A7}) admits a \emph{martingale} solution $[\vr, \vu, c, W]$ in the class
\[
\vu \in C^1([0,T]; H_m), \ \vr, \ \frac{1}{\vr} \in C^1([0,T]; C^\nu(\TN)) \cap C([0,T]; C^{2 + \nu}(\TN)),
\]
\[
\expe{ \left( \sup_{t \in [0,T]} \| c \|^2_{W^{2,2}(\TN)} \right)^\beta } +
\expe{ \left( \int_0^T \| c \|^2_{W^{4,2}(\TN)} \right)^\beta } \aleq 1, \ \beta \geq 0.
\]

\end{Proposition}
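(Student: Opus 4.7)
My plan is to pass to the limit $n\to\infty$ in the Galerkin system (\ref{L2}--\ref{L4}), whose solvability at each $n$ is furnished by Proposition~\ref{P1}, via the stochastic compactness method. The first task is to collect uniform--in--$n$ estimates. Since $\Pi_n$ commutes with $\Del$ and with all spatial derivatives, I would reproduce the three It\^o identities of Section~\ref{SS1} for $\intTN{c^2}$, $\intTN{|\Grad c|^2}$, and $\intTN{|\Del c|^2}$ at the Galerkin level; combined with Burkholder--Davis--Gundy and Gronwall, these yield the bound (\ref{L6}) on $\cn$. The pathwise bound (\ref{L7}) on $\vrn$ together with the lower bound $\vrn\ge C^1_\ep$ follows from the standard parabolic theory for (\ref{L3}), the drift $[\vun]_R$ being uniformly controlled by $R$ in $H_m$, while (\ref{L8}) is obtained by differentiating (\ref{L4}) in time and invoking norm equivalence on the finite--dimensional space $H_m$. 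H\"older time regularity (\ref{L9}) of $\cn$ is then produced by estimating the drift and stochastic increments of (\ref{L2}) in a suitable dual norm, applying Burkholder--Davis--Gundy to the martingale part, and invoking Kolmogorov's continuity criterion.

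With these bounds in hand, the joint laws of $[\vrn,\vun,\cn,W]$ are tight on the sub--Polish space $\mathfrak{X}$ introduced above; Theorem~\ref{T1} then produces, on the standard probability space, equidistributed copies $[\tilde\vr_n,\tilde\vu_n,\tilde c_n,\tilde W_n]$ converging almost surely in $\mathfrak{X}$ to a limit $[\vr,\vu,c,W]$. By the standard argument of \cite[Theorem~2.9.1]{BrFeHobook}, the tilde variables still satisfy (\ref{L2}--\ref{L4}) with $\tilde W_n$ in place of $W$, and $\tilde W_n$ remains a cylindrical Wiener process with respect to its canonical filtration.

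Passing to the limit in the deterministic equations (\ref{L3}), (\ref{L4}) is routine, since the H\"older topologies on $\mathfrak{X}_\vr$ and $\mathfrak{X}_\vu$ provide strong convergence of $\tilde\vr_n$ and $\tilde\vu_n$, while the strong convergence of $\tilde c_n$ in $C^\mu([0,T];L^2(\TN))$ combined with the $L^\infty_t W^{2,2}_x$ bound suffices to identify every nonlinearity involving $c$, $\Grad c$, $\Del c$, and $\partial f(\vr,c)/\partial c$. The projection $\Pi_n$ in (\ref{L2}) drops out in the limit upon testing against a fixed element of $\bigcup_n X_n$ and invoking density. The step I expect to be most delicate is the passage to the limit in the stochastic integral $\int_0^\cdot\Pi_n[\sigma(\tilde c_n)]\,\D\tilde W_n$; this is precisely where Lemma~\ref{LL1} is applied, with $G_n=\Pi_n[\sigma(\tilde c_n)]$. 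Convergence of $G_n$ to $\sigma(c)$ in $L^2(0,T;L_2(\mathfrak{U};W^{\ell,2}(\TN)))$ in probability follows from the Lipschitz and $W^{2,\infty}$ control of $\sigma_k$ in (\ref{M8}), the strong convergence of $\tilde c_n$, and a dominated--convergence argument in $k$ using $\sum_k\alpha_k^2<\infty$. The resulting $[\vr,\vu,c,\tilde W]$ is the sought--after martingale solution, and the quantitative bound in the statement is inherited from (\ref{L6}) by lower semicontinuity under the $\mathfrak{X}$--convergence combined with Fatou's lemma.
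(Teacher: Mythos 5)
Your proposal follows essentially the same route as the paper: uniform-in-$n$ bounds (\ref{L6})--(\ref{L9}) obtained by reproducing the It\^o identities of Section \ref{SS1} at the Galerkin level plus Burkholder--Davis--Gundy and Kolmogorov, tightness on the pathspace $\mathfrak{X}$, the Jakubowski--Skorokhod theorem (Theorem \ref{T1}) to get a.s.\ convergent equidistributed copies, identification of the limit equations via equality of laws, and Lemma \ref{LL1} for the stochastic integral. The argument is correct and matches the paper's proof in all essential respects.
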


\begin{Remark} \label{RRR1}

\emph{Martingale solution} to the approximate system (\ref{A5}--\ref{A7}) is defined similarly to Definition \ref{D1}.

\end{Remark}

\section{Asymptotic limit }
\label{L}

Our ultimate goal is to perform successively the limits
$R \to \infty$, $m \to \infty$, and $\ep \to 0$ in the approximate system (\ref{A5}--\ref{A7}).

\subsection{Basic energy estimate}

To perform the asymptotic limit we need uniform estimates on the sequence of approximate solutions.
To this end, we
take the scalar product of the momentum equation
(\ref{A6}) with
$\vu \in C([0,T]; H_m)$ and use (\ref{A5}) obtaining
\begin{equation} \label{E1}
\begin{split}
\D \intTN{ \frac{1}{2} \vr |\vu|^2 } &+ \ep \intTN{ \vr |\Grad \vu|^2 } \dt
+ \intTN{ \mathbb{S}(\Grad \vu) : \Grad \vu } \dt  \\ &=  \intTN{ \vr^2 \frac{\partial f(\vr,c) }{\partial \vr} \Div [\vu]_R } \dt + \intTN{ \sqrt{\ep} \vr^\alpha \Div [\vu]_R } \dt \\
  &+
\intTN{ \left( \Grad c \otimes \Grad c - \frac{1}{2} |\Grad c|^2 \mathbb{I} \right) : \Grad [\vu]_R } \dt.
\end{split}
\end{equation}

The next step is {to apply the It\^o formula to the free energy defined in \eqref{free}}. At the first step of approximation, equation (\ref{A7}) admits strong solutions and It\^{o}'s
calculus applies directly:
\begin{equation} \label{E2}
\begin{split}
\D &\intTN{ \left( \vr f(\vr, c) + \frac{1}{2} |\Grad c|^2 \right) } + \intTN{ |\Grad\mu|^2 } \dt \\&=
\intTN{ \left( \vr \frac{\partial f(\vr,c)}{\partial \vr} + f(\vr, c) \right) \left( \ep \Del \vr - \Div (\vr [\vu]_R ) \right) } \dt  - \intTN{ \vr \mu [\vc{u}]_R \cdot \Grad c } \dt
\\ &+ \intTN{ \vr \mu \sigma(c) } \D W + \intTN{ \frac{1}{2} \sum_{k=1}^\infty \alpha_k^2 |\Grad \sigma_k (c) |^2 } \dt
+ \intTN{ \frac{1}{2} \vr \frac{\partial^2 f(\vr,c)}{\partial c^2} \sum_{k=1}^\infty { \alpha_k^2} |\sigma_k(c)|^2 } \dt.
\end{split}
\end{equation}

Furthermore,
\[
- \intTN{ \vr \mu [\vc{u}]_R \cdot \Grad c } \dt = \intTN{ \Del c [\vu]_R \cdot \Grad c }\, { {\rm d}t}-
\intTN{ \vr \frac{\partial f(\vr, c) }{\partial c} [\vu]_R \cdot \Grad c } \,{ {\rm d}t}.
\]
Seeing that
\[
\Del c \Grad c = \Div \left( \Grad c \otimes \Grad c - \frac{1}{2} |\Grad c|^2 \mathbb{I} \right)
\]
we may put together (\ref{E1}), (\ref{E2}) obtaining
\begin{equation} \label{E3}
\begin{split}
\D &\intTN{ \left[ \frac{1}{2} \vr |\vu|^2 + \vr f(\vr,c) + \frac{1}{2} |\Grad c|^2 \right] }  + \intTN{ \Big[ \mathbb{S}(\Grad \vu) : \Grad \vu +  |\Grad \mu|^2 \Big] } \dt \\ &+
\ep \intTN{ \vr |\Grad \vu|^2 } \dt\\
&=
\intTN{ \vr^2 \frac{\partial f(\vr,c) }{\partial \vr} \Div [\vu]_R } \dt + \intTN{ \sqrt{\ep} \vr^\alpha \Div [\vu]_R } \dt -
\intTN{ \vr \frac{\partial f(\vr, c) }{\partial c} [\vu]_R \cdot \Grad c } \, { {\rm d}t}\\
&-
\intTN{ \left( \vr \frac{\partial f(\vr,c)}{\partial \vr} + f(\vr, c) \right) \Div (\vr [\vu]_R ) } \dt
\\
&+ \ep \intTN{ \left( \vr \frac{\partial f(\vr,c)}{\partial \vr} + f(\vr, c) \right) \Del \vr   } \dt
\\
&+ \intTN{ \vr \mu \sigma(c) } \D W + \intTN{ \frac{1}{2} \sum_{k=1}^\infty \alpha_k^2 |\Grad \sigma_k (c) |^2 } \dt
+ \intTN{ \frac{1}{2} \vr \frac{\partial^2 f(\vr,c)}{\partial c^2} \sum_{k=1}^\infty \alpha_k^2 |\sigma_k(c)|^2 } \dt.
\end{split}
\end{equation}

Finally, we check by direct manipulation that
\[
\begin{split}
&\intTN{ \sqrt{\ep} \vr^\alpha \Div [\vu]_R } \dt +
\intTN{ \vr^2 \frac{\partial f(\vr,c) }{\partial \vr} \Div [\vu]_R } \dt -
\intTN{ \vr \frac{\partial f(\vr, c) }{\partial c} [\vu]_R \cdot \Grad c } \dt \\
&= - \D \intTN{ \frac{\sqrt{\ep}}{\alpha - 1} \vr^\alpha }  +
\intTN{ \left( \vr \frac{\partial f(\vr,c)}{\partial \vr} + f(\vr, c) \right) \Div (\vr [\vu]_R ) } \dt\\
&- \sqrt{\ep}\intTN{{\ep \alpha |\Grad \vr|^2 \vr^{\alpha-2}} }\dt;
\end{split}
\]
hence (\ref{E3}) reduces to
\begin{equation} \label{E4}
\begin{split}
\D &\intTN{ \left[ \frac{1}{2} \vr |\vu|^2 + \vr f(\vr,c) + \frac{1}{2} |\Grad c|^2 + \frac{\sqrt{\ep}}{\alpha - 1} \vr^\alpha \right] } + \intTN{ \Big[ \mathbb{S}(\Grad \vu) : \Grad \vu +  |\Grad \mu|^2 \Big] } \dt \\ &+
\ep \intTN{ \vr |\Grad \vu|^2 } \dt + \sqrt{\ep} \intTN{ { \ep \alpha \vr^{\alpha - 2} |\Grad \vr |^2} } \dt\\
&= -
\ep \intTN{ \frac{\partial^2 (\vr f(\vr, c)) }{\partial \vr^2} |\Grad \vr|^2   } \dt
- \ep \intTN{ \frac{\partial^2 (\vr f(\vr, c)) }{\partial \vr \partial c} \Grad \vr \cdot \Grad c } \dt
\\
&+ \intTN{ \vr \mu \sigma(c) } \D W + \intTN{ \frac{1}{2} \sum_{k=1}^\infty \alpha_k^2 |\Grad \sigma_k (c) |^2 } \dt
+ \intTN{ \frac{1}{2} \vr \frac{\partial^2 f(\vr,c)}{\partial c^2} \sum_{k=1}^\infty \alpha^2_k |\sigma_k(c)|^2 } \dt
 \end{split}
\end{equation}
{Remark moved at the beginning of 5.2.1}

Thus we have shown:

\begin{Proposition} \label{PEL1}

The martingale solutions obtained under the hypotheses of Proposition \ref{P2} satisfy the energy balance:
\begin{equation} \label{E4a}
\begin{split}
&\left[ \psi \intTN{ \left[ \frac{1}{2} \vr |\vu|^2 + \vr f(\vr,c) + \frac{1}{2} |\Grad c|^2 + \frac{\sqrt{\ep}}{\alpha - 1} \vr^\alpha \right] } \right]_{t = 0}^{\tau}
\\
&+
\int_0^\tau \psi \intTN{ \Big[ \mathbb{S}(\Grad \vu) : \Grad \vu +  |\Grad \mu|^2 \Big] } \dt \\ &+
\ep \int_0^\tau \psi \intTN{ \vr |\Grad \vu|^2 } \dt + \sqrt{\ep} \int_0^\tau \psi \intTN{ {\ep \alpha \vr^{\alpha - 2} |\Grad \vr |^2 }} \dt\\
&= \int_0^\tau \partial_t \psi \intTN{ \left[ \frac{1}{2} \vr |\vu|^2 + \vr f(\vr,c) + \frac{1}{2} |\Grad c|^2 + \frac{\sqrt{\ep}}{\alpha - 1} \vr^\alpha \right] } \dt\\
&-
\ep \int_0^\tau \psi \intTN{ \frac{\partial^2 (\vr f(\vr, c)) }{\partial \vr^2} |\Grad \vr|^2   } \dt
- \ep \int_0^\tau \psi \intTN{ \frac{\partial^2 (\vr f(\vr, c)) }{\partial \vr \partial c} \Grad \vr \cdot \Grad c } \dt
\\
&+ \int_0^\tau \psi \intTN{ \frac{1}{2} \sum_{k=1}^\infty \alpha_k^2 |\Grad \sigma_k (c) |^2 } \dt
+ \int_0^\tau \psi \intTN{ \frac{1}{2} \vr \frac{\partial^2 f(\vr,c)}{\partial c^2} \sum_{k=1}^\infty \alpha^2_k |\sigma_k(c)|^2 } \dt \\
&+ \int_0^\tau \psi \intTN{ \vr \mu \sigma(c) } \D W
 \end{split}
\end{equation}
for any $\psi \in C^1[0,T]$. Here
\[
\begin{split}
&\intTN{ \left[ \frac{1}{2} \vr |\vu|^2 + \vr f(\vr,c) + \frac{1}{2} |\Grad c|^2 + \frac{\sqrt{\ep}}{\alpha - 1} \vr^\alpha \right] { (0)}} \\
&=\intTN{ \left[ \frac{1}{2} \vr_0 |\vu_0|^2 + \vr f(\vr_0,c_0) + \frac{1}{2} |\Grad c_0|^2 + \frac{\sqrt{\ep}}{\alpha - 1} \vr_0^\alpha \right] }.
\end{split}
\]

\end{Proposition}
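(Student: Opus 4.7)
\emph{Proof proposal.} The plan is to make rigorous the formal computation already sketched in the text, leading to \eqref{E4} at the level of the approximate solutions furnished by Proposition~\ref{P2}, and then to recover \eqref{E4a} by integration by parts in time against the deterministic test function $\psi$.

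For the kinetic part, I would work pathwise: the momentum equation \eqref{A6} is a finite-dimensional ODE for $\Pi_m(\vr\vu) \in H_m$ (no noise enters the momentum balance at this approximation level) whose coefficients are smooth functions of the pathwise-regular quantities $\vr,\,[\vu]_R,\,c$ supplied by Proposition~\ref{P2}. Taking the scalar product with $\vu \in H_m$ and using the regularized continuity equation \eqref{A5} to reorganize the convective and pressure terms produces \eqref{E1} by standard deterministic computations. For the free-energy part, I would apply the infinite-dimensional It\^o formula in the form of \cite{LiRo} to the functional
\[
c \mapsto \mathcal{F}[\vr,c] = \intTN{\left( \vr f(\vr,c) + \tfrac12 |\Grad c|^2 \right)},
\]
viewed as a function of the It\^o process $c$ solving \eqref{A7}, with $\vr$ entering as a smooth parameter path of zero quadratic variation. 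The required Fr\'echet differentiability in $c$ follows from hypothesis \eqref{H1} on $f$ together with the bounds $c \in L^\infty(0,T; W^{2,2}(\TN))$ and $\vr,\,1/\vr \in C([0,T];C^{2+\nu}(\TN))$ provided by Proposition~\ref{P2}, and the diffusion coefficient $\sigma(c)$ lies in $W^{2,\infty}$ by \eqref{M8}; this delivers \eqref{E2}, the last two drift terms on its right-hand side being exactly the second-order It\^o correction.

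Adding \eqref{E1} and \eqref{E2}, I would then perform the algebraic reorganization described in the text: the transport term $-\intTN{\vr\mu[\vu]_R\cdot\Grad c}$ splits into a capillary contribution that cancels against the capillary force tested on $\vu$ via the identity $\Del c\,\Grad c = \Div(\Grad c \otimes \Grad c - \tfrac12|\Grad c|^2\mathbb{I})$, and a convective $\partial_c f$ contribution, while the artificial pressure term $\sqrt{\ep}\vr^\alpha\Div[\vu]_R$ is absorbed by subtracting a time derivative of $\tfrac{\sqrt\ep}{\alpha-1}\vr^\alpha$, the regularizing Laplacian in \eqref{A5} generating the dissipative quantity $\ep\alpha\vr^{\alpha-2}|\Grad\vr|^2$ on the left and the two cross terms involving $\partial^2_\vr(\vr f)$ and $\partial_\vr\partial_c(\vr f)$ on the right. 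This produces the pathwise It\^o identity \eqref{E4}. To conclude, I would multiply by $\psi \in C^1[0,T]$ and apply It\^o's product rule (or, equivalently, integrate by parts in time), which is legitimate since $\psi$ is deterministic and preserves the martingale character of the stochastic integral; doing so over $[0,\tau]$ yields precisely \eqref{E4a}, with the initial energy identified by the prescribed law $\Lambda_0$. The main obstacle I anticipate is the rigorous application of the It\^o formula to $\mathcal{F}[\vr,c]$ despite $\vr$ being itself random; I would resolve this by treating $(\vr,c)$ as a joint It\^o process with $\vr$ of bounded variation and zero diffusion, so that only the $c$-derivatives contribute to the quadratic variation correction, the necessary smoothness of $\mathcal{F}$ in $c$ being ensured by the regularity bounds of Proposition~\ref{P2} together with the structural form \eqref{H1} of $f$.
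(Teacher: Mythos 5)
Your proposal is correct and follows essentially the same route as the paper: pathwise testing of the Galerkin momentum equation with $\vu$ combined with the regularized continuity equation to get \eqref{E1}, the It\^o formula applied to the free energy functional (with $\vr$ entering as a finite-variation path so that only the $c$-component contributes the quadratic-variation correction) to get \eqref{E2}, the capillary cancellation via $\Del c\,\Grad c = \Div(\Grad c\otimes\Grad c - \tfrac12|\Grad c|^2\mathbb{I})$ and the absorption of the artificial pressure into $\tfrac{\sqrt\ep}{\alpha-1}\vr^\alpha$ to reach \eqref{E4}, and finally multiplication by the deterministic $\psi$ and integration in time. No substantive differences from the paper's argument.
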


\subsection{Vanishing artificial viscosity limit}
\label{V}

We recall that our goal is to let $R \to \infty$, $m \to \infty$, and $\ep \to \infty$ in the approximate system (\ref{A5}--\ref{A7}). We focus only on the last and most difficult step letting $\ep \to 0$. We leave to the reader to work out the limits $R \to \infty$ and $m \to \infty$ that are easier and can be performed in a direct fashion. In addition, we also assume the initial data obey the law $\Lambda_0$ specified in Theorem \ref{Tmain}.

{ The weak formulation of the approximate problem}:

\begin{equation} \label{V1}
\D \vr + \Div (\vr \vu) \dt = \ep \Del \vr \dt, \ \vr(0, \cdot) = \vr_0;
\end{equation}
{reads as follows:}
\begin{equation} \label{V2}
\begin{split}
\left[ \intTN{ \vr \vu \cdot \bfphi } \right]_{t = 0}^{t = \tau} &=
\int_0^\tau \intTN{ \left[ \vr \vu \cdot \partial_t \bfphi + \vr \vu \otimes \vu : \Grad \bfphi + \left( p(\vr,c) + \sqrt{\ep} \vr^\alpha \right)\Div \bfphi \right] } \dt\\
&+ \int_0^\tau \intTN{ \ep \vr \vu \Del \bfphi - \mathbb{S}(\Grad \vu) : \Grad \bfphi } \dt\\ &+
\int_0^\tau \intTN{ \left( \Grad c \otimes \Grad c - \frac{1}{2} |\Grad c|^2 \mathbb{I} \right) : \Grad \bfphi } \dt,
\end{split}
\end{equation}
for any (deterministic) $\bfphi \in C^1([0,T] \times \TN; R^N)$;
\begin{equation} \label{V3}
\begin{split}
\left[ \intTN{ \vr c \varphi }  \right]_{t = 0}^{t = \tau} &=
\int_0^\tau \intTN{ \left[ \vr c \Big(\partial_t \varphi + \vu \cdot \Grad \varphi \Big) - \Grad \mu \cdot \Grad \varphi + \ep \Grad \vr  \cdot \Grad (c \varphi) \right] }\dt\\
&- \int_0^\tau \left( \intTN{ \vr \sigma(c) \varphi } \right) \D W
\end{split}
\end{equation}
for any (deterministic) $\varphi \in C^1([0,T] \times \TN)$;
\begin{equation} \label{V4}
\vr \mu = \vr \frac{\partial f(\vr,c)}{\partial c} - \Del c \ \mbox{a.a. in}\ (0,T) \times \Omega.
\end{equation}

\begin{Remark} \label{NR1}

At this stage, meaning having performed the limits $R \to \infty$ and $m \to \infty$, the density is still sufficiently regular to
satisfy equation (\ref{V1}) in the strong sense, cf. \cite[Chapter 7, Section 7.4]{EF70}.

\end{Remark}

In addition, we suppose solutions at this stage to satisfy the energy \emph{inequality}:
\begin{equation} \label{V5}
\begin{split}
& \left[ \psi \intTN{ \left[ \frac{1}{2} \vr |\vu|^2 + \vr f(\vr,c) + \frac{1}{2} |\Grad c|^2
+ \frac{\sqrt{\ep}}{\alpha - 1} \vr^\alpha
\right] } \right]_{t = 0}^{t = \tau} \\ &+
\int_0^\tau \psi \intTN{ \Big[ \mathbb{S}(\Grad \vu) : \Grad \vu +  |\Grad \mu|^2 \Big] } \dt \\
&\leq \int_0^\tau \partial_t \psi \intTN{ \left[ \frac{1}{2} \vr |\vu|^2 + \vr f(\vr,c) + \frac{1}{2} |\Grad c|^2 \right] } \dt
\\
& -
\ep \int_0^\tau \psi \intTN{ \frac{\partial^2 (\vr f(\vr, c)) }{\partial \vr^2} |\Grad \vr|^2   } \dt
- \ep \int_0^\tau \psi \intTN{ \frac{\partial^2 (\vr f(\vr, c)) }{\partial \vr \partial c} \Grad \vr \cdot \Grad c } \dt
\\
&+ \int_0^\tau \psi \intTN{ \frac{1}{2} \sum_{k=1}^\infty \alpha_k^2 |\Grad \sigma_k (c) |^2 } \dt
+ \int_0^\tau \psi \intTN{ \frac{1}{2} \vr \frac{\partial^2 f(\vr,c)}{\partial c^2} \sum_{k=1}^\infty \alpha^2_k |\sigma_k(c)|^2 } \dt\\
&+ \int_0^\tau \psi \left( \intTN{ \vr \mu \sigma(c) } \right) \D W
 \end{split}
\end{equation}
for a.a. $\tau \in (0,T)$ and any (deterministic) $\psi \in C^1[0,T]$, $\psi \geq 0$. Inequality (\ref{V6}) results from (\ref{E4a}) after
the limits $\R \to \infty$, $m \to \infty$. The details of this process will be clear from the arguments presented below.

\subsubsection{Uniform bounds}

The desired uniform bounds will be deduced from the energy inequality (\ref{V5}). In the following manipulation, we make use of the specific form of
the free energy,
\begin{equation} \label{V6}
f(\vr,c) = a \vr^{\gamma - 1} + \log(\vr) H(c) + f^c(c), \ a > 0,\ \gamma>3,
\end{equation}
satisfying hypothesis (\ref{H1}).
\begin{Remark} \label{R1}

In future considerations, we will have to control the term
\[
- \ep \intTN{ \frac{\partial^2 (\vr f(\vr, c)) }{\partial \vr^2} |\Grad \vr|^2  } \dt
\]
appearing on the right--hand side of the energy balance (\ref{E4}). Suppose that there is a function $\Gamma = \Gamma(\vr)$ such that
\[
\vr \mapsto \vr f(\vr, c) + \Gamma(\vr)
\]
is convex as a function of $\vr$ for any $c$. Accordingly,
\[
\begin{split}
- \ep \intTN{ \frac{\partial^2 (\vr f(\vr, c)) }{\partial \vr^2} |\Grad \vr|^2  } \dt &\leq
\ep \intTN{ \frac{\partial^2 \Gamma(\vr)}{\partial \vr^2} |\Grad \vr|^2 } \dt= - \ep \intTN{ \frac{\partial \Gamma(\vr)}{\partial \vr} \Del \vr } \dt \\ &=
- \D \intTN{ \Gamma(\vr) } - \intTN{ \frac{ \partial \Gamma(\vr)}{\partial \vr} \Div (\vr [\vu]_R) } \dt\\&=
- \D \intTN{ \Gamma(\vr) } + \intTN{ \left( \Gamma(\vr) - \vr \frac{\partial \Gamma(\vr)}{\partial \vr} \right) \Div [\vu]_R } \dt
\end{split}
\]

\end{Remark}

In view of (\ref{H1}), (\ref{V6}), we may { use  Remark \ref{R1} with $\Gamma(\vr)=\Gamma \vr \log(\vr) $ in order}  to estimate
\begin{equation} \label{V8}
- \ep \int_0^\tau \intTN{ \frac{\partial^2 (\vr f(\vr, c)) }{\partial \vr^2} |\Grad \vr|^2   } \dt
\leq - \Gamma \left[ \intTN{ \vr \log(\vr) } \right]_{t = 0}^{t = \tau}
- \Gamma \int_0^\tau \intTN{ \vr \Div \vu } \dt
\end{equation}
for some sufficiently large $\Gamma > 0$.

Next, we have to handle the integral
\[
\begin{split}
\ep &\left| \int_0^\tau \intTN{ \frac{\partial^2 (\vr f(\vr, c)) }{\partial \vr \partial c} \Grad \vr \cdot \Grad c } \dt \right| =
\ep \left| \int_0^\tau \intTN{ \left(1 + \log(\vr) \right) H'(c) \Grad \vr \cdot \Grad c } \dt \right| \\ &\aleq
\int_0^\tau \intO{ \frac{1}{2} |\Grad c |^2 } \dt + \ep^2 \int_0^\tau \intO{ (1 + |\log(\vr)|)^2 |\Grad \vr|^2 } \dt.
\end{split}
\]
To control the second integral, we use the renormalized version of (\ref{V1}), namely
\begin{equation} \label{V9}
\D b(\vr) + \Div (b(\vr) \vu) \dt  + \Big[ b'(\vr) \vr - b(\vr) \Big] \Div u\ \dt = \ep b'(\vr)  \Del \vr \ \dt.
\end{equation}
After integration, we obtain
\[
\ep \int_0^\tau \intTN{ b''(\vr) |\Grad \vr|^2 } \dt = - \left[ \intTN{ b(\vr) } \right]_{t = 0}^{t = \tau}
+ \int_0^\tau \intTN{ \Big[ b(\vr) - b'(\vr) \vr \Big] \Div \vu } \dt.
\]
Thus choosing $b_\omega(\vr) = \vr \log(\vr) + \vr^{2 + \omega}$ , $\omega > 0$, we obtain
\begin{equation} \label{V10}
\begin{split}
\ep &\left| \int_0^\tau \intTN{ \frac{\partial^2 (\vr f(\vr, c)) }{\partial \vr \partial c} \Grad \vr \cdot \Grad c } \dt \right|\\
&\aleq \int_0^\tau \intO{ \frac{1}{2} |\Grad c |^2 } \dt  - \ep \left[ \intTN{ b_{\omega}(\vr) } \right]_{t = 0}^{t = \tau}
+ \ep \int_0^\tau \intTN{ \Big[ b_\omega (\vr) - b'_\omega(\vr) \vr \Big] \Div \vu } \dt
\end{split}
\end{equation}

Finally, we recall Korn's inequality
\begin{equation} \label{V11}
\intTN{ \mathbb{S} (\Grad \vu) : \Grad \vu } \ageq \intTN{ |\Grad \vu|^2 }.
\end{equation}

Summing up (\ref{V8}), (\ref{V10}), (\ref{V11}) we deduce from (\ref{V5}):
\begin{equation} \label{V12}
\begin{split}
\Big[ &\intTN{ \left( \frac{1}{2} \vr |\vu|^2 + \vr f(\vr,c) + \frac{1}{2} |\Grad c|^2
+ \Gamma \vr \log(\vr) + \frac{\sqrt{\ep}}{\alpha - 1} \vr^\alpha  + \ep b_{\omega}(\vr)
\right) } \Big]_{t = 0}^{t = \tau} \\&+
\int_0^\tau \intTN{ \Big[ |\Grad \vu|^2 +  |\Grad \mu|^2 \Big] } \dt \\
&\aleq - \int_0^\tau \intTN{\Gamma \vr \Div \vu} \dt
+ \ep \int_0^\tau \intTN{ \Big[ b_\omega (\vr) - b'_\omega(\vr) \vr \Big] \Div \vu } \dt
\\
&+ \int_0^\tau \intTN{ \left[ |\Grad c|^2 + \vr |\log(\vr)| \right] } \dt
\\
&+ \int_0^\tau \left( \intTN{ \vr \mu \sigma(c) } \right) \D W
 \end{split}
\end{equation}
as long as $\sigma$ complies with (\ref{M8}).

As for the stochastic integral in (\ref{V12}), we apply Burkholder--Davis--Gundy inequality
\begin{equation} \label{V13}
\begin{split}
&\expe{ \sup_{t \in (0,\tau)} \left( \int_0^t \left( \intTN{ \vr \mu \sigma(c) } \right) \D W \right)^\beta }
\leq c(\beta) \expe{ \int_0^\tau \left( \sum_{k= 1}^\infty \alpha^2_k \left( \intTN{ \vr \mu \sigma_k (c) } \right)^2  \right)^{\beta/2} }\\
&\leq c(\beta) \expe{ \int_0^\tau \left( \sum_{k= 1}^\infty \alpha^2_k \left( \intTN{ \left( \vr f(\vr,c) \sigma_k(c) + \sigma'_k(c) |\Grad c|^2 \right) } \right)^2  \right)^{\beta/2} }\\
&\leq c(\beta) \expe{ \left( \sup_{t \in (0, \tau)} \intTN{ \vr |f(\vr,c)| + |\Grad c|^2 } \right)^{\beta} }.
\end{split}
\end{equation}

Finally, we observe that
\begin{equation} \label{V14}
\begin{split}
- \int_0^\tau \intTN{\Gamma \vr \Div \vu} \dt
&+ \ep \int_0^\tau \intTN{ \Big[ b_\omega (\vr) - b'_\omega(\vr) \vr \Big] \Div \vu } \dt \\ &\leq \frac{1}{2}
\int_0^\tau \intO{ |\Grad \vu|^2 } \dt + c \int_0^\tau \intO{ \left( \vr^2 + \ep \vr^{4 + 2 \omega} \right) } \dt.
\end{split}
\end{equation}
Thus the last term, being of order $\ep$, is dominated by the integral
$\sqrt{\ep}/(\alpha - 1) \intTN{ \vr^\alpha }$ as soon as $\alpha > 4 + 2 \omega$.

Summing up (\ref{V12}--\ref{V14}) and applying Gronwall's inequality, we deduce the following bounds:
\begin{equation} \label{V15}
\begin{split}
&\expe{ \sup_{\tau \in (0,T) } \left( \intTN{ \left[ \vr |\vu|^2 + \vr^\gamma + \vr c^2 + |\Grad c|^2 \right] } \right)^{\beta} }
+ \expe{ \left( \int_0^T \intTN{ |\Grad \vu|^2 + |\Grad \mu |^2 } \dt \right)^{\beta} } \\ &\leq
c(\beta) \left( 1 + \expe{ \left( \intTN{ \left[ \vr_0 |\vu_0|^2 + \vr_0^\gamma + \vr_0 c_0^2 + |\Grad c_0|^2 \right] } \right)^{\beta} } \right)
\ \mbox{for any}
\ \beta \geq 1.
\end{split}
\end{equation}

The inequality (\ref{V15}) yields bounds on $\Grad \vu$ and $\Grad \mu$. In order to control $\vu$ and $\mu$, a suitable version of Poincar\' e inequality is needed.

\begin{Lemma} \label{LL2}
Let $\Omega \subset \RR^N$, $N \geq 2$, be a bounded Lipschitz domain. Suppose that $\vr \in L^\gamma(\Omega)$, $\vr \geq 0$, $\intO{ \vr} \geq M > 0$,
where $\gamma > \frac{2N}{N+2}$.

Then
\[
\| v \|_{L^2(\Omega)}^2 \leq c(\Omega, \gamma, M) \left[ \left( 1 + \| \vr \|^2_{L^\gamma(\Omega)} \right)
\| \Grad v \|^2_{L^2(\Omega; \RR^N)} + \left| \intO{ \vr v } \right|^2 \right]
\]
for any $v \in W^{1,2}(\Omega)$.

\end{Lemma}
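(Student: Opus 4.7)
The approach is to reduce the weighted inequality to the standard Poincaré--Sobolev inequality via the mean-value decomposition $v = v_\Omega + (v - v_\Omega)$, where $v_\Omega := \frac{1}{|\Omega|}\intO{v}$. The Poincaré--Sobolev inequality on a bounded Lipschitz domain provides simultaneously
\[
\| v - v_\Omega \|_{L^2(\Omega)} \leq C(\Omega)\,\|\Grad v\|_{L^2(\Omega;\RR^N)} \qquad \mbox{and} \qquad \| v - v_\Omega \|_{L^{\gamma'}(\Omega)} \leq C(\Omega,\gamma)\,\|\Grad v\|_{L^2(\Omega;\RR^N)},
\]
where $\gamma' = \gamma/(\gamma-1)$ is the H\"older conjugate of $\gamma$. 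The second inequality is the crucial ingredient, and is precisely what the hypothesis $\gamma > \frac{2N}{N+2}$ is designed to supply.

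Granted those two bounds, the remainder of the proof is a three-line computation. I split $\intO{\vr v} = v_\Omega\,\intO{\vr} + \intO{\vr(v - v_\Omega)}$ and apply H\"older to the second term:
\[
\left|\intO{\vr(v - v_\Omega)}\right| \leq \|\vr\|_{L^\gamma(\Omega)}\,\|v - v_\Omega\|_{L^{\gamma'}(\Omega)} \leq C\,\|\vr\|_{L^\gamma(\Omega)}\,\|\Grad v\|_{L^2(\Omega;\RR^N)}.
\]
Using $\intO{\vr} \geq M > 0$, I solve for $v_\Omega$ to obtain
\[
|v_\Omega|^2 \leq \frac{2}{M^2}\left(\left|\intO{\vr v}\right|^2 + C^2\,\|\vr\|_{L^\gamma(\Omega)}^2\,\|\Grad v\|_{L^2(\Omega;\RR^N)}^2\right).
\]
The triangle inequality $\|v\|_{L^2(\Omega)}^2 \leq 2\|v - v_\Omega\|_{L^2(\Omega)}^2 + 2|\Omega|\,|v_\Omega|^2$, combined with the classical Poincar\'e bound for the first summand and the preceding estimate for the second, yields the claimed inequality with a constant depending only on $\Omega$, $\gamma$, $M$.

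The single substantive point is the exponent-matching in the first step: the H\"older dual $\gamma'$ must lie within the range of the Sobolev embedding of $W^{1,2}(\Omega)$. For $N \geq 3$ this embedding reaches $L^{2N/(N-2)}$, and a direct computation shows that $\gamma' \leq \frac{2N}{N-2}$ is equivalent to $\gamma \geq \frac{2N}{N+2}$, which is exactly the hypothesis (strict inequality is not needed and is harmless). For $N = 2$ the condition is automatic, since $W^{1,2}(\Omega)$ embeds into every $L^p(\Omega)$ with $p < \infty$. This exponent check is the only place where the assumption on $\gamma$ enters, and the only place where any thought is required; no compactness or contradiction argument is necessary.
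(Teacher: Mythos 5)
Your proof is correct and takes essentially the same route as the paper: the same decomposition $v = v_\Omega + (v - v_\Omega)$, the same Sobolev--Poincar\'e bound on $\|v - v_\Omega\|_{L^{\gamma'}(\Omega)}$, the same H\"older estimate combined with $\intO{\vr} \geq M$ to control the mean value. The explicit verification that $\gamma' \leq \frac{2N}{N-2}$ is equivalent to $\gamma \geq \frac{2N}{N+2}$ is left implicit in the paper, but it is exactly the point where the hypothesis on $\gamma$ enters.
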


\begin{proof}

By standard Sobolev--Poincar\' e inequality, we have
\[
\| v - <v> \|_{L^q(\Omega)} \leq c(\Omega, q) \| \Grad v \|_{L^2(\Omega; \RR^N)}\ \mbox{whenever}\ 1 \leq q < \frac{2N}{N-2},
\]
where
\[
<v> = \frac{1}{|\Omega|} \intO{ v }.
\]
In particular,
\begin{equation} \label{V16}
\| v \|_{L^2(\Omega)} \leq c(\Omega) \left[ \| \Grad v \|_{L^2(\Omega; \RR^N)} + | < v > | \right].
\end{equation}

On the other hand,
\[
\begin{split}
|< v >| &\leq \frac{1}{M} \left| \intO{ \vr <v> } \right| = \frac{1}{M} \left| \intO{ \vr { (v - <v> ) }}  -\intO{ \vr v } \right| \\
& \leq \frac{1}{M} \left( \| \vr \|_{L^\gamma (\Omega)} \| v - <v> \|_{L^{\gamma'}(\Omega)} + \left| \intO{ \vr v } \right| \right)\\
&\leq \frac{c(\Omega; \gamma)}{M} \left( \| \vr \|_{L^\gamma (\Omega)} \| \Grad v \|_{L^2(\Omega)}  + \left| \intO{ \vr v } \right| \right),
\end{split}
\]
which, together with (\ref{V16}), completes the proof.

\end{proof}

Using Lemma \ref{LL2}, we can deduce from (\ref{V15}) that
\begin{equation} \label{V17}
\begin{split}
&\expe{ \left( \int_0^T \left( \| \vu \|^2_{W^{1,2}(\Omega; \RR^N)} + \| \mu \|^2_{W^{1,2}(\Omega)} \right)  \dt \right)^{\beta} } \\ &\aleq
c(\beta) \left( 1 + \expe{ \left( \intTN{ \left[ \vr_0 |\vu_0|^2 + \vr_0^\gamma + \vr_0 c_0^2 + |\Grad c_0|^2 \right] } \right)^{\blue \beta} } \right)
\end{split}
\end{equation}
provided the total mass is bounded below by a deterministic constant,
\begin{equation} \label{V18}
\intTN{ \vr } = \intTN{ \vr_0 } \geq M > 0 \ \mbox{a.s.}
\end{equation}
Indeed, we have
\[
\intTN{ \vr \mu } = \intTN{ \vr \frac{\partial f(\vr, c) }{\partial c} - \Del c } = \intTN{ \vr \frac{\partial f(\vr, c) }{\partial c} }
\]
bounded in terms of the initial data.

Finally, we follow the arguments used in Section \ref{SUB}, to deduce a bound on the modulus of continuity of $\vr c$. To this end, we use the weak formulation
(\ref{V3}), (\ref{V4}) of the Cahn--Hilliard equation. Given a test function $\phi \in C^1(\TN)$, the main task is to control the stochastic integral
\[
\int_{\tau_1}^{\tau_2} \left( \intTN{ \vr \sigma (c) \phi } \right) \ \D W.
\]
Similarly to Section \ref{SUB}, we use the Burkholder--Davis--Gundy inequality,
\[
\begin{split}
&\expe{ \sup_{\tau_1 \leq \tau \leq \tau_2} \left| \int_{\tau_1}^{\tau} \left( \intTN{ \vr \sigma (c) \phi } \right) \ \D W \right|^\beta } \\ &\leq c(\beta)
\expe{ \left( \int_{\tau_1}^{\tau_2} \sum_{k=1}^\infty {\alpha_k^2} \left( \intTN{ \vr \sigma_k (c) \phi } \right)^2 \right)^{\beta/2} },
\end{split}
\]
where, furthermore,
\[
\sup_{\tau_1 \leq  \tau \leq \tau_2} \left( \intTN{ \vr \sigma_k (c) \phi } \right)^2 \aleq \| \phi \|_{L^\infty(\TN)}^2 \left( \intTN{ \vr } \right)^2 =
\| \phi \|_{L^\infty(\TN)}^2 M^2.
\]
Using Kolmogorov's continuity criterion we may infer that
\begin{equation} \label{V19}
\expe{ \| \vr c \|_{C^\omega([0,T]; W^{-\ell,2} (\TN))}^\beta } \aleq c(\beta, \Lambda_0) \ \mbox{for a certain}\ 0 < \omega {\blue <} \frac{1}{2},\ \ell > \frac{N + 2}{2}.
\end{equation}

\begin{Remark} \label{NR3}

Recall that the test function $\phi$ was required to be continuously differentiable, meaning
\[
\phi \in W^{\ell,2}(\TN) \hookrightarrow C^1(\TN) \ \mbox{whenever} \ \ell > \frac{N + 2}{2}.
\]

\end{Remark}

\subsubsection{Stochastic compactness method}

Let $[\vre, \vue, \ce, \Grad \vue, \Grad \ce, \Del \ce, \mu_\ep ]$ be a family of approximate solutions of the problem (\ref{V1}--\ref{V4}) for $\ep > 0$.
{ In the previous section we obtained that the following sequences are uniformly bounded:
$$\sqrt{\vr_\ep}\vu_\ep \in L^\beta(\Omega, L^\infty(0,T, L^2(\TN))),$$
$$\vu_\ep \in L^\beta(\Omega, L^2(0,T, W^{1,2}(\TN))),$$
$$\mu_\ep \in L^\beta(\Omega, L^2(0,T, W^{1,2}(\TN))),$$
$$\vr_\ep \in L^\beta(\Omega, L^\infty(0,T, L^\gamma(\TN))),$$
$$\nabla_x c_\ep \in L^\beta(\Omega, L^\infty(0,T, L^2(\TN))),$$
$$\vr_\ep c_\ep \in L^\beta(\Omega, C^\omega([0,T]; W^{-\ell,2} (\TN))), \mbox{for  a certain } \omega\in(0,1/2) \mbox{ and } l>\dfrac{N+2}{2}.$$
Now, using these uniform bounds,} our ultimate goal is to apply a variant of the stochastic compactness method to obtain a sequence of approximate solutions converging in suitable topologies a.s.
To this end, it is convenient to work in weak topologies, in particular, we use the negative Sobolev spaces $W^{-\ell,2}(\TN)$.

Weak convergence in $L^p$-spaces can be effectively described in terms of Young measures that conveniently capture possible oscillations in non-linear compositions,
cf. e.g. Pedregal \cite{PED1}. The crucial tool to accommodate this technique
in the stochastic framework is the following result proved in \cite[Chapter 2.8]{BrFeHobook}:
\begin{Proposition} \label{P3}
Let
$\{ \Omega, \mathfrak{B}, \mathcal{P} \}$ be a complete probability space.
Let
\begin{itemize}
\item

$\{ \vc{U}_{0,\ep} \}_{\ep > 0}$ be a sequence of
random variables ranging in a Polish space $Y_0$;
\item
$\{ \vc{U}_\ep \}_{\ep > 0}$ a sequence of random variables in $L^1(Q_T; \RR^M)$;
\item
$\{ W_\ep \} _{\ep > 0}$
a sequence of cylindrical Wiener processes.
\end{itemize}

Let, finally,
$[[ \cdot ]]: W^{-m,2}(Q_T; \RR^M) \to [0, \infty]$, $m > \frac{N+1}{2}$ be a Borel measurable function.

Suppose that the family of laws of $\{ \vc{U}_{0,\ep } \}_{\ep > 0}$
is tight in $Y_0$. In addition, suppose that
for any $\delta > 0$, there exists $\Gamma > 0$ such that
\[
\begin{split}
\prst \left\{ \ \| \vc{U}_\ep   \|_{L^q(Q_T; \RR^M)} > \Gamma \ \right\} &< \delta \ \mbox{for some}\ q \geq 1;
\\
\prst \left\{ [[ \vc{U}_\ep ]] > \Gamma \ \right\} &< \delta
\end{split}
\]
uniformly for $\ep > 0$.

Then there exist subsequences of random variables $\{ \tilde{\vc{U}}_{0,\ep(j)} \}_{j \geq 1}$
in $Y_0$, $\{ \tilde{\vc{U}}_{\ep(j)} \}_{j \geq 1}$,\ $\tilde{\vc{U}}_{\ep (j)} \in L^1(Q_T; \RR^M)$ and
cylindrical Wiener processes $\{ \tilde{W}_{\ep (j)} \}_{j \geq 1}$ defined  on the standard probability space
$\Big\{ [0,1], \Ov{ \mathfrak{B}[0,1] }, {\rm d}y \Big\}$ enjoying the following properties ${\rm d}y$-a.s.:
\begin{itemize}
\item
\[
\left[ \vc{U}_{0,\ep (j)}, \vc{U}_{n(j)}, W_{\ep (j)} \right] \sim \left[ \tilde{\vc{U}}_{0, \ep (j)}, \tilde{\vc{U}}_{\ep (j)}, \tilde{W}_{\ep (j)} \right] \ \mbox{(equivalence in law)};
\]
\item
\[
\tilde{\vc{U}}_{\ep (j)} \to \vc{U} \ \mbox{in}\ W^{-m,2}(Q_T; \RR^M),\
g \left( \tilde{\vc{U}}_{\ep (j)} \right) \to \Ov{g (\vc{U})}\footnote{{Here and in all that follows, we denote the weak limit of nonlinear terms by the pointwise limit under the bar sign.}}
\ \mbox{weakly-(*) in}\ L^\infty(Q_T) \ \mbox{for any}\ g \in C_c(\RR^M);
\]
\item
\[
\tilde{\vc{U}}_{0, \ep(j)} \to \vc{U}_0 \ \mbox{in}\ Y_0,\ \tilde{W}_{\ep(j)} \to W \ \mbox{in}\ C([0,T]; \mathfrak{U}_0);
\]
\item
\[
\sup_{j \geq 1} [[ \tilde{\vc{U}}_{\ep (j)} ]] < \infty.
\]

\end{itemize}

If, in addition $q > 1$, then a.a.
\[
\tilde{\vc{U}}_{\ep(j)} \to \vc{U} \ \mbox{weakly in}\ L^q(Q_T; \RR^M),\
f \left( \tilde{\vc{U}}_{\ep (j)} \right) \to \Ov{f (\vc{U})}
\ \mbox{weakly in}\ L^r(Q_T)
\]
for any $f \in C(\RR^M)$ such that
\[
| f(\vc{v}) | \leq c \left(1 + |\vc{v}|^{s} \right),\ 1 \leq s < q,\ r = \frac{q}{s} > 1.
\]

\end{Proposition}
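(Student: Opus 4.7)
The strategy is to recast the statement as an application of Jakubowski's extension of Skorokhod's theorem (Theorem \ref{T1}) on a carefully engineered sub-Polish path space that simultaneously encodes strong convergence in $W^{-m,2}(Q_T;\RR^M)$ and weak-$*$ convergence in $L^\infty(Q_T)$ of every nonlinear composition $g(U_\ep)$ with $g \in C_c(\RR^M)$. Concretely, I would fix a countable family $\{g_k\}_{k\geq 1}$ that is dense in $C_c(\RR^M)$ in the sup norm and consider
\[
\mathfrak{X} = W^{-m,2}(Q_T;\RR^M) \times \prod_{k=1}^\infty \mathcal{B}_k \times [0,\infty] \times C([0,T];\mathfrak{U}_0) \times Y_0,
\]
where $\mathcal{B}_k$ is the closed ball of radius $\|g_k\|_\infty$ in $L^\infty(Q_T)$ equipped with the weak-$*$ topology inherited from $L^1(Q_T)$. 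Because $L^1(Q_T)$ is separable, each $\mathcal{B}_k$ is metrizable and compact, so a countable separating family of continuous functionals (evaluations against a countable dense subset of $L^1(Q_T)$ on each $L^\infty$-coordinate, together with the natural continuous functionals on the remaining factors) is available; hence $\mathfrak{X}$ is sub-Polish in the sense of Theorem \ref{T1}. The random variables of interest embed into $\mathfrak{X}$ via $U_\ep \mapsto (U_\ep, (g_k(U_\ep))_k, [[U_\ep]], W_\ep, U_{0,\ep})$.

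I would then verify tightness of the induced joint laws. Tightness of $U_\ep$ in $W^{-m,2}(Q_T;\RR^M)$ follows from the hypothesis $\|U_\ep\|_{L^q} \leq \Gamma$ with high probability together with the compact embedding $L^1(Q_T) \hookrightarrow W^{-m,2}(Q_T)$, valid for $m > (N+1)/2$ by duality applied to the Rellich--Kondrachov embedding $W^{m,2}(Q_T) \hookrightarrow C(\overline{Q_T})$. Tightness in each $\mathcal{B}_k$ is automatic by weak-$*$ compactness of balls, and tightness of the scalar variable $[[U_\ep]]$ in $[0,\infty]$ is precisely the second probabilistic hypothesis. Tightness of $\{U_{0,\ep}\}$ in $Y_0$ is assumed, while tightness of $\{W_\ep\}$ in $C([0,T];\mathfrak{U}_0)$ is classical. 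Applying Theorem \ref{T1} produces a subsequence and equivalent-in-law copies $\{\tilde{U}_{0,\ep(j)}, \tilde{U}_{\ep(j)}, \tilde{W}_{\ep(j)}\}$ on $([0,1], \overline{\mathfrak{B}[0,1]}, \mathrm{d}y)$ whose images in $\mathfrak{X}$ converge $\mathrm{d}y$-a.s. The $W^{-m,2}$-coordinate supplies the strong a.s.\ limit $U$, while the $k$-th $\mathcal{B}_k$-coordinate supplies a weak-$*$ limit we denote $\overline{g_k(U)}$. A density argument against arbitrary $\phi \in L^1(Q_T)$, using $\|g-g_k\|_\infty<\eta$ and the uniform bound $\|g(\tilde{U}_\ep)\|_\infty \leq \|g\|_\infty$, extends the identification to all $g \in C_c(\RR^M)$. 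When $q>1$, I would enlarge $\mathfrak{X}$ by the weak topology of $L^q$ on balls (metrizable by separability of $L^{q'}$) to obtain $\tilde{U}_{\ep(j)} \rightharpoonup U$ in $L^q$; then, for $|f(v)| \leq c(1+|v|^s)$ with $s<q$, the de la Vall\'ee Poussin criterion yields uniform integrability of $f(\tilde{U}_{\ep(j)})$ in $L^r$, $r=q/s>1$, and weak $L^r$-convergence to a limit that coincides with $\overline{f(U)}$ through the same Young-measure mechanism applied to a countable approximating family of truncations.

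The principal delicacy lies in reconciling two genuinely different modes of convergence within a single framework: the strong convergence in $W^{-m,2}(Q_T)$ produces a pointwise-defined limit $U$, whereas the nonlinear compositions $g(U_\ep)$ generically fail to converge to $g(U)$, their weak-$*$ limits $\overline{g(U)}$ encoding oscillations through an underlying Young measure on $Q_T \times \RR^M$. Engineering a sub-Polish product space that captures all such compositions as coordinate-wise limits, while retaining a countable separating family of continuous functionals --- a point that relies crucially on the separability of $L^1(Q_T)$ --- is the subtle topological ingredient of the construction. Once this is in place, the remaining measure-theoretic steps (equality of laws, measurability of $U$ and of $\overline{g(U)}$, and stability of $[[\cdot]]$ under the change of probability space) follow the template already employed in the Galerkin limit of Section \ref{SUB} and in \cite[Chapter 2.8]{BrFeHobook}.
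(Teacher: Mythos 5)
The paper does not actually prove Proposition \ref{P3}; it is quoted as a result established in \cite[Chapter 2.8]{BrFeHobook}, and your reconstruction --- Jakubowski's theorem (Theorem \ref{T1}) applied on a sub-Polish product space whose coordinates record the $W^{-m,2}$ limit, the weak-$*$ limits of a countable dense family of compositions $g_k(\vc{U}_\ep)$, the value of $[[\cdot]]$, the Wiener processes and the initial data --- is essentially the argument given in that reference. The only points you leave implicit are routine within that template: the identification of the transferred $\mathcal{B}_k$-coordinates with $g_k(\tilde{\vc{U}}_{\ep(j)})$ via equality of laws, and the a.s. bound $\sup_{j}[[\tilde{\vc{U}}_{\ep(j)}]]<\infty$, which follows because the $[0,\infty]$-coordinate converges a.s. to a limit that is finite a.s. by the uniform probability hypothesis.
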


In the present case, the vector $\vc{U}_{0,\ep}$ will represent the initial data,
\[
\vc{U}_{0,\ep} = \left[ \vr_{0,\ep} , \vu_{0, \ep} , c_{0, \ep} \right],\ Y_0 = \left[ L^\gamma (\TN), L^2(\TN; \RR^N), W^{1,2}(\TN) \right],
\]
while $\vc{U}_\ep$ is the solution vector
\[
\vc{U}_\ep = \left[ \vre, \vue, \ce, \Grad \vue, \Grad \ce, \Del \ce, \mu_\ep \right].
\]
The functional $[[ \cdot ]]$ includes all norms estimated in (\ref{V15}), (\ref{V17}), (\ref{V19}), specifically,
\begin{equation} \label{V20}
\begin{split}
[[ \ \vr, \vu, c, \Grad \vu, \Grad c, \Del c, \mu  \ ]] &= {\rm ess} \sup_{\tau \in [0,T]} \left( \intTN{ \vr |\vu|^2 + \vr^\gamma + \vr c^2 + |\Grad c |^2 } \right)\\
&+ \int_0^T \left( \| \vu \|^2_{W^{1,2}(\TN; \RR^N)} + \| \mu \|^2_{W^{1,2}(\TN)} + \| \Del c \|^2_{L^q(\TN)} \right) \dt\\ &+
\sup_{\tau_1, \tau_2 \in [0,T]} \frac{ \| \vr c(\tau_1, \cdot) - \vr c (\tau_2, \cdot) \|_{W^{-\ell,2}(\TN)} } {| \tau_1 - \tau_2 |^\omega },\ 0 < \omega < \frac{1}{2},
\end{split}
\end{equation}
where
\[
1 < q < \gamma \ \mbox{if}\ N = 2, \ \mbox{and} \ q = \frac{6 \gamma}{\gamma + 6} \ \mbox{if}\ N=3.
\]
Indeed, relation (\ref{V4}) yields
\[
\Del c = \vr \left( \log(\vr) \partial_c H(c) + \partial_c f^c(c) \right) - \vr \mu;
\]
whence the bound for $\Del c$ follows from H\" older's inequality.

In view of Proposition \ref{P3}, we may change the probability space and obtain a family of random
variables
$[\tvre, \tvue, \tce, \Grad \tvue, \Grad \tce, \Del \tce, \widetilde{\mu}_\ep, \tilde{W}_\ep ]$ such that the following holds a.s.:
\begin{itemize}
\item
$[\vre, \vue, \ce, \Grad \vue, \Grad \ce, \Del \ce, \mu_\ep, W ]$ $\sim$ $[\tvre, \tvue, \tce, \Grad \tvue, \Grad \tce, \Del \tce, \widetilde{\mu}_\ep,
\tilde{W}_\ep ]$
(equivalence  in law);
\item
$[\tvre, \tvue, \tce, \Grad \tvue, \Grad \tce, \Del \tce, \widetilde{\mu}_\ep,
\tilde{W}_\ep ]$ satisfies the same system of equations (\ref{V1}--\ref{V4}), together with the energy inequality (\ref{V12});
\item
the sequence
$\left\{ [\tvre, \tvue, \tce, \Grad \tvue, \Grad \tce, \Del \tce, \widetilde{\mu}_\ep] \right\}_{\ep > 0} $ generates a Young measure;
\item
\[
\tvre \to \vr \ \mbox{weakly-(*) in}\ L^\infty(0,T; L^\gamma(\TN)) \ \mbox{and in}\
C([0,T]; W^{-\ell,2}(\TN));
\]
\[
\tvue \to \vu \ \mbox{weakly in}\ L^2(0,T; W^{1,2}(\TN; R^N));
\]
\[
\tvre \tvue \to \vr \vu \ \mbox{weakly-(*) in} \ L^\infty(0,T; L^{\frac{2 \gamma}{\gamma + 1}}(\TN; R^N))
\ \mbox{and in}\ C([0,T]; W^{-\ell,2}(\TN; R^N));
\]
\[
\tce \to c \ \mbox{weakly-(*) in}\ L^\infty(0,T; W^{1,2}(\TN))
\ \mbox{and weakly in}\ L^2(0,T; W^{2,2}(\TN));
\]
\[
\tvre \tce \to \vr c \ \mbox{in}\ C([0,T]; W^{-\ell,2}(\TN));
\]
\[
\tilde{\mu}_\ep \to \mu \ \mbox{weakly in}\ L^2(0,T; W^{1,2}(\TN));
\]
\[
\tilde{W}_\ep \to W \ \mbox{in}\ C([0,T]; \mathfrak{U}_0).
\]

\end{itemize}

Now, following the arguments of \cite{AbFei}, we show strong convergence of $\Grad \tce$. To begin with, { using the compactness results obtained previously,} we claim that
\[
\tvre \frac{\partial f(\tvre, \tce)}{\partial c} - \Del c_\ep =
\tvre \tilde{\mu}_\ep \to \vr \mu = \Ov{ \vr  \frac{ \partial f(\vr,c) }{\partial c} } - \Del c \ \mbox{weakly in} \ L^2(Q_T) \ \mbox{a.s.}
\]
Indeed, as $\gamma > 3$, it follows that
\[
\tvre \to \vr \in C([0,T]; W^{-1,2}(\TN)) \ \mbox{while}\ \tilde{\mu}_\ep \to \mu \ \mbox{weakly in}\ L^2(0,T; W^{1,2}(\TN)),
\]
which yields the claim.

Next, applying the same argument to $\tvre \tce$ and $\tilde{\mu}_\ep$, we get a.s.
\[
\tvre \tce \tilde{\mu}_\ep \to \vr c \mu \ \mbox{weakly in}\ L^s(Q_T) \ \mbox{for some}\ s > 1.
\]
In particular
\[
\int_0^T \intTN{ \tvre \tce \tilde{\mu}_\ep } \dt =
\int_0^T \intTN{ \tvre  \frac{\partial f(\tvre, \tce)}{\partial c} \tce + |\Grad \tce|^2 } \dt \to
\int_0^T \intTN{ \Ov{\vr \frac{\partial f(\vr, c) }{\partial c} } c + |\Grad c|^2 } \dt.
\]

Finally, we repeat the arguments of \cite[Section 2.6]{AbFei} to show
\[
\int_0^T \intTN{ \tvre  \frac{ \partial f(\tvre, \tce) }{\partial c} \tce } \dt \to \int_0^T \intTN{ \Ov{\vr  \frac{ \partial f(\vr, c) }{\partial c} } c } \dt,
\]
yielding the desired conclusion
\[
\Grad \tce \to \Grad c \ \mbox{in} \ L^2((0,T) \times \TN; \RR^N).
\]

At this stage, compactness of $\{ \tvre \}_{\ep > 0}$ can be established by means of equations (\ref{V1}), (\ref{V2}) using the same deterministic arguments as in \cite{AbFei}. Finally, we may perform the limit in the stochastic equation (\ref{V3}) using Lemma \ref{LL1}. The same can be done also for the energy inequality
(\ref{V5}) obtaining:
\[
\begin{split}
& \left[ \psi \intTN{ \left[ \frac{1}{2} \vr |\vu|^2 + \vr f(\vr,c) + \frac{1}{2} |\Grad c|^2 \right] } \right]_{t = 0}^{t = \tau} +
\int_0^\tau \psi \intTN{ \Big[ \mathbb{S}(\Grad \vu) : \Grad \vu +  |\Grad \mu|^2 \Big] } \dt \\
&\leq \int_0^\tau \partial_t \psi \intTN{ \left[ \frac{1}{2} \vr |\vu|^2 + \vr f(\vr,c) + \frac{1}{2} |\Grad c|^2 \right] } \dt
\\
&+ \int_0^\tau \psi \intTN{ \frac{1}{2} \sum_{k=1}^\infty \alpha_k^2 |\Grad \sigma_k (c) |^2 } \dt
+ \int_0^\tau \psi \intTN{ \frac{1}{2} \vr \frac{\partial^2 f(\vr,c)}{\partial c^2} \sum_{k=1}^\infty \alpha^2_k |\sigma_k(c)|^2 } \dt\\
&+ \int_0^\tau \psi \left( \intTN{ \vr \mu \sigma(c) } \right) \D W
 \end{split}
\]
for a.a. $\tau \in (0,T)$ and any (deterministic) $\psi \in C^1[0,T]$, $\psi \geq 0$.

We have proved Theorem \ref{Tmain}.

\section{Concluding remarks}
\label{D}

The hypothesis of Theorem \ref{Tmain} are slightly more restrictive than in the purely deterministic case examined in \cite{AbFei}, specifically,
$\gamma > 3$, $\nu_{\rm shear} = {\rm const}$, $\nu_{\rm bulk} = 0$, in contrast with
$\gamma > \frac{3}{2}$, $\nu_{\rm shear} = \nu_{\rm shear} (c) $, $\nu_{\rm bulk} = \nu_{\rm bulk} (c)$ for $N=3$ in \cite{AbFei}.
We claim that the conclusion of Theorem \ref{Tmain} holds under the more general constitutive hypothesis imposed in \cite{AbFei}. The approximation scheme must be augmented
by one level corresponding to the artificial pressure and the technique of \cite[Chapters 6,7]{EF70} must be used to overcome the problem of compactness in the Navier--Stokes system.
Moreover, the hypotheses imposed on the diffusion coefficients $\sigma_k$ can be relaxed as well. We leave to the interested reader to work out the details.

\def\cprime{$'$} \def\ocirc#1{\ifmmode\setbox0=\hbox{$#1$}\dimen0=\ht0
  \advance\dimen0 by1pt\rlap{\hbox to\wd0{\hss\raise\dimen0
  \hbox{\hskip.2em$\scriptscriptstyle\circ$}\hss}}#1\else {\accent"17 #1}\fi}


\end{document}